\def\@cite#1#2{[{{\bfseries #1}\if@tempswa , #2\fi}]}
\renewcommand{\section}{%
\@startsection{section}{1}{\z@}
{0.5truecm plus -1ex minus -.2ex}%
{1.0ex plus .2ex}{\bfseries\large}}
\def\@seccntformat#1{\csname the#1\endcsname.\ }
\numberwithin{equation}{section} 
\theoremstyle{theorem}
\newtheorem{thm}{Theorem}[section]
\newtheorem{lem}[thm]{Lemma}
\newtheorem{prop}[thm]{Proposition}
\theoremstyle{definition}
\newtheorem{df}{Definition}[section]
\newtheorem{remark}{Remark}[section]
\newcommand{\ep}{\varepsilon}
\newcommand{\pa}{\partial}
\newcommand{\ol}{\overline}
\begin{document}
\footnote[0]
    {2020{\it Mathematics Subject Classification}\/. 
    Primary: 35B44; 
    Secondary: 35K65, 
                    92C17. 
    }
\footnote[0]
    {{\it Key words and phrases}\/: 
     degenerate diffusion; chemotaxis; Lotka--Volterra; finite-time blow-up}
\begin{center}
    \Large{{\bf Finite-time blow-up in a two species chemotaxis-competition model \\with
degenerate diffusion}}
\end{center}
\vspace{5pt}
\begin{center}
    Yuya Tanaka\footnote{Partially supported by JSPS KAKENHI Grant Number JP22J11193.}
   \footnote[0]{
    E-mail: 
    {\tt yuya.tns.6308@gmail.com}
    }\\
    \vspace{12pt}
    Department of Mathematics, 
    Tokyo University of Science\\
    1-3, Kagurazaka, Shinjuku-ku, 
    Tokyo 162-8601, Japan\\
    \vspace{2pt}
\end{center}
\begin{center}    
    \small \today
\end{center}

\vspace{2pt}
\newenvironment{summary}
{\vspace{.5\baselineskip}\begin{list}{}{%
     \setlength{\baselineskip}{0.85\baselineskip}
     \setlength{\topsep}{0pt}
     \setlength{\leftmargin}{12mm}
     \setlength{\rightmargin}{12mm}
     \setlength{\listparindent}{0mm}
     \setlength{\itemindent}{\listparindent}
     \setlength{\parsep}{0pt}
     \item\relax}}{\end{list}\vspace{.5\baselineskip}}
\begin{summary}
{\footnotesize {\bf Abstract.} This paper is concerned with the two-species chemotaxis-competition model with degenerate diffusion, 
  \[
  \begin{cases}
    u_t = \Delta u^{m_1} - \chi_1 \nabla\cdot(u\nabla w)
          + \mu_1 u (1-u-a_1v),
    &x\in\Omega,\ t>0,\\
    v_t = \Delta v^{m_2} - \chi_2 \nabla\cdot(v\nabla w)
            + \mu_2 v (1-a_2u-v),
    &x\in\Omega,\ t>0,\\
    0 = \Delta w +u+v-\ol{M}(t), 
    &x\in\Omega,\ t>0,
  \end{cases}
  \]
with $\int_\Omega w(x,t)\,dx=0$, $t>0$,
where $\Omega := B_R(0) \subset \mathbb{R}^n$ $(n\ge5)$ 
is a ball with some $R>0$;
$m_1,m_2>1$, $\chi_1,\chi_2,\mu_1,\mu_2,a_1,a_2>0$; 
$\ol{M}(t)$ is the spatial average of $u+v$.
The purpose of this paper is to show finite-time blow-up 
in the sense that there is $\widetilde{T}_{\rm max}\in(0,\infty)$ such that
  \[
    \limsup_{t \nearrow \widetilde{T}_{\rm max}}
    (\|u(t)\|_{L^\infty(\Omega)} + \|v(t)\|_{L^\infty(\Omega)})=\infty
  \]
for the above model within a concept of weak solutions fulfilling
a moment inequality which leads to blow-up.
To this end, we also give a result on finite-time blow-up 
in the above model with the terms $\Delta u^{m_1}$, $\Delta v^{m_2}$ 
replaced with the nondegenerate diffusion terms $\Delta (u+\delta)^{m_1}$, $\Delta (v+\delta)^{m_2}$, where $\delta\in(0,1]$.}
\end{summary}
\vspace{10pt}

\newpage

\section{Introduction}

\noindent{\bf Background.} A population dynamics of two species moving randomly and competing with each other is described as the diffusive Lotka--Volterra competition model  
  \[
    \begin{cases}
    u_t = \Delta u + \mu_1 u (1-u-a_1v),\\
    v_t = \Delta v + \mu_2 v (1-a_2u-v)
    \end{cases}
  \]
(see e.g.\ \cite{MurrayII}), where $\mu_1,\mu_2,a_1,a_2>0$ are constants and 
$u,v$ denote the population densities of the two species. 
In mathematical studies, it is known that solutions are always bounded. 
Moreover, results on stabilization were obtained 
(see e.g.\ \cite{Brown, CS_1977, MR_1979, KY_1993, KW_1985, Lou-Ni, Matano-Mimura}). 
Also, Pao \cite{Pao_2015} introduced 
the Lotka--Volterra competition model
with degenerate diffusion, 
  \[
    \begin{cases}
    u_t = \Delta u^{m_1} + \mu_1 u (1-u-a_1v),\\
    v_t = \Delta v^{m_2} + \mu_2 v (1-a_2u-v),
    \end{cases}
  \]
where $m_1,m_2\ge1$, and the author in \cite{Pao_2015} proved 
existence and uniqueness of classical solutions 
and asymptotic behavior. 
In \cite{HHWY_2020} a result 
on periodic solutions was obtained.
With regard to these models, 
the {\it linear} diffusion terms $\Delta u, \Delta v$ 
show the random walk of two species, 
while the {\it degenerate} diffusion terms
$\Delta u^{m_1}, \Delta v^{m_2}$ 
imply the movement of two species from 
high density region to low density region, 
which means that species avoid crowding.
Thus the model with degenerate diffusion 
provides a more realistic pattern of diffusion 
than that with linear diffusion for some species
(see e.g.\ \cite[Section 10.4]{Okubo-Leivin} and \cite[Section 11.3]{MurrayI}). 

Now, what will happen if the above model additionally has 
some terms playing opposite roles to degenerate diffusion, that is, 
promoting concentrations of population densities? 
Considering this question, we focus on the two-species 
chemotaxis-competition model with degenerate diffusion, 
  \begin{align}\label{model}
    \begin{cases}
      u_t = \Delta u^{m_1} - \chi_1 \nabla\cdot(u\nabla w)
            + \mu_1 u (1-u-a_1v),\\
      v_t = \Delta v^{m_2} - \chi_2 \nabla\cdot(v\nabla w)
              + \mu_2 v (1-a_2u-v),\\
      0 = \Delta w +u+v-w,
    \end{cases}
  \end{align}
where $\chi_1,\chi_2>0$ 
and $w$ represents a concentration of a signal substance. 
Here, this model with $m_1=m_2=1$ was proposed 
by Tello and  Winkler \cite{T-W_2012}, 
and the chemotaxis terms 
$- \chi_1 \nabla\cdot(u\nabla w), - \chi_2 \nabla\cdot(v\nabla w)$
imply that species move toward higher concentrations of the signal substance. 
Namely these chemtaxis terms work in the opposite way to the degenerate diffusion terms. 

Going back to the aforementioned question, 
we next give known results in \eqref{model} in the case of linear diffusion ($m_1=m_2=1$). 
In this case results on boundedness and stabilization were established 
under smallness conditions for $\chi_1$ and $\chi_2$ 
in \cite{BLM, M_2018_MMAS, STW, T-W_2012}; 
for details, in the cases that $a_1, a_2 \in (0,1)$ (\cite{BLM, T-W_2012}) 
and that $a_1>1>a_2$ (\cite{STW}) 
boundedness and stabilization toward spatially constant equilibria were 
shown under smallness conditions for $\chi_1$ and $\chi_2$; 
after that, in \cite{M_2018_MMAS} these conditions were improved 
in some cases; in particular, in \cite{M_2018_MMAS} 
the condition for boundedness was given by 
$\chi_1<\min\{ 1, a_1 \}\mu_1 \cdot \frac{n}{n-2}$
and 
$\chi_2<\min\{ 1,a_2 \}\mu_2 \cdot \frac{n}{n-2}$
(in \cite{MTY_2022} the same result was proved in a simplified model). 
In the model \eqref{model} with $0 = \Delta w +u+v-w$ 
replaced with $w_t = \Delta w +u+v-w$, 
similar results were obtained under smallness conditions for 
$\chi_1$ and $\chi_2$ in \cite{B-W, LMW_2015, M_2017_DCDSB}. 
On the contrary, in the case that $\chi_1$ and $\chi_2$ are 
large, it was established that blow-up occurs in the higher dimensional setting 
for a simplified model with linear diffusion
by the joint paper with Mizukami and Yokota \cite{MTY_2022}; 
indeed, in the case $m_1=m_2=1$, 
for the model \eqref{model} with $0 = \Delta w +u+v-w$ replaced with 
$0 = \Delta w +u+v-\ol{M}(t)$, where $\ol{M}(t)$ is the spatial average of $u+v$, 
it was shown that 
if $n\ge5$ and $\chi_1, \chi_2$ satisfy the largeness condition such that
  \[
    \chi_1>\frac{n}{n-4}\cdot\max\{ 1,a_1 \}\mu_1
    \quad \mbox{and} \quad \chi_2>\mu_2a_2, 
  \]
then there exists an initial data such that the corresponding solution 
blows up at some finite-time 
$T^\star$ in the sense that 
$\lim_{t \nearrow T^\star} 
( \| u(\cdot,t) \|_{L^\infty(\Omega)} + \| v(\cdot,t) \|_{L^\infty(\Omega)} ) = \infty$ 
in \cite[Section 5]{MTY_2022}. 
A related work on finite-time blow-up can be found in 
\cite[Section 4]{MTY_2022}, where a model \eqref{model} with 
$\mu_1 u (1-u-a_1v)$ and $\mu_2 v (1-a_2u-v)$ 
replaced with $\mu_1 u (1-u^{\kappa_1-1}-a_1v^{\lambda_1-1})$ 
and $\mu_2 v (1-a_2u^{\lambda_2-1}-v^{\kappa_2-1})$ ($\kappa_i,\lambda_i>1$, $i\in\{1,2\}$) was considered; 
in such a model, if $\kappa_2=\lambda_2=2$ and 
the third equation is $0 = \Delta w +u-w$, 
non-simultaneous blow-up occurs (see in \cite{MT_2023}).

From the above works, in the case of {\it linear} diffusion, 
results on boundedness and finite-time blow-up were established 
in some two-species chemotaxis-competition models. 
However, in the case of {\it degenerate} diffusion, 
to the best of our knowledge, there is no result in 
the corresponding models. 
Thus the following question arises: 
  \begin{center}
    Do solutions remain bounded or blow up in the case of {\it degenerate} diffusion? 
  \end{center}
The purpose of this paper is to give an answer on occurrence of blow-up to
a two-species chemotaxis-competition model. 

\medskip

\noindent{\bf Review of results on blow-up in chemotaxis systems.} Before considering the above question, we give an overview of known results on blow-up in the chemotaxis system with logistic source, 
  \begin{align}\label{modelm}
  \begin{cases}
    u_t = \Delta (u+\delta)^m - \chi \nabla\cdot(u\nabla w)
          + \lambda u - \mu u^\kappa,\\
    0 = \Delta w +u-\ol{M_u}(t)
\end{cases}
  \end{align}
where $\delta\in(0,1]$, $m\ge1$, $\chi,\lambda,\mu>0$, $\kappa>1$ 
and $\ol{M_u}$ is the spatial average of $u$. 
In the case $m=1$, Winkler \cite{W-2011} first found initial data such 
that the corresponding solution blows up in finite time 
under the smallness condition for $\kappa$ in the higher dimensional setting; a similar blow-up result was obtained in the three and four dimensional settings by Black, Fuest and Lankeit \cite{B-F-L}; 
furthermore, Fuest \cite{F_2021_optimal} improved the conditions for $\kappa$ in \cite{B-F-L, W-2011} to the condition that $1<\kappa<\min\left\{2,\frac{n}{2}\right\}$ and $\mu>0\ (n\ge3)$ and that $\kappa=2$ and $\mu\in\left(0,\frac{n-4}{n}\right)\ (n\ge5)$; 
in the two dimensional setting, finite-time blow-up occurs when $\int_\Omega u_0>8\pi$ in \eqref{modelm} with 
$\lambda u - \mu u^\kappa$ replaced with 
$\lambda(|x|) u - \mu(|x|) u^\kappa$ in \cite{F-2020}, 
and moreover, possible points of blow-up were investigated in 
\cite{BFLM_2022}.
In the case $m\ge1$, Black, Fuest and Lankeit \cite{B-F-L} 
showed blow-up of solutions to \eqref{modelm} with $\delta=1$ 
under the condition $m\in\big[1,2-\frac{2}{n}\big)$ and 
the smallness condition for $\kappa$; 
the other condition leading to blow-up was found in \cite{T_2022_JMAA}, 
which is the condition $2>\max\big\{m+\frac{2}{n}\kappa,\kappa\big\}$; 
moreover, in the case of degenerate diffusion ($\delta=0$) 
a solution blowing up was constructed 
under the same condition in \cite{T_2022_JMAA}.
In the system \eqref{modelm} with $0 = \Delta w +u-\ol{M_u}(t)$ 
replaced with $0 = \Delta w +u-v$, results on blow-up were 
found in \cite{B-F-L,T_2023_Eq,W-2018}; Winkler \cite{W-2018} 
proved that in the case $m=1$ blow-up occurs under the condition that  
$1<\kappa<\frac{7}{6}\ (n\in\{3,4\})$ and $1<\kappa<1+\frac{1}{2(n-1)}\ (n\ge5)$; 
in the case that $m\ge1$ and $\delta\in(0,1]$ 
a generalized condition leading to blow-up was derived in \cite{B-F-L}; 
also, in \cite{T_2023_Eq}, under the same condition in \cite{B-F-L} 
it was shown that there exists a solution blowing up in the case $\delta=0$.

\medskip
\noindent{\bf Main results.} Referring to \cite[Section 5]{MTY_2022}, 
we deal with the simplified two-species chemotaxis-competition model 
with $\delta\in(0,1]$ or $\delta=0$, 
\begin{align}\label{P0}
  \begin{cases}
    u_t = \Delta (u+\delta)^{m_1} - \chi_1 \nabla\cdot(u\nabla w)
          + \mu_1 u (1-u-a_1v),
    &x\in\Omega,\ t>0,\\
    v_t = \Delta (v+\delta)^{m_2} - \chi_2 \nabla\cdot(v\nabla w)
            + \mu_2 v (1-a_2u-v),
    &x\in\Omega,\ t>0,\\
    0 = \Delta w +u+v-\ol{M}(t), 
    &x\in\Omega,\ t>0,
\end{cases}
\end{align}
with $\int_\Omega w(x,t)\,dx=0$, $t>0$,
where $\Omega = B_R(0) \subset \mathbb{R}^n$ $(n\ge5)$ 
is a ball with some $R>0$;
$m_1,m_2>1$, $\chi_1,\chi_2,\mu_1,\mu_2,a_1,a_2>0$;
  \[
    \ol{M}(t):=\frac{1}{|\Omega|}\int_\Omega (u(x,t)+v(x,t))\,dx
    \quad\mbox{for }t>0.
  \]

We recall the method to prove blow-up in \cite{MTY_2022}. 
In the case that $m_1=m_2=1$ and $\delta=0$, the key to the proof of blow-up 
was to derive the inequality 
$\phi_U'(t)\ge C\phi_U^2(t)$ with some $C>0$ 
in a framework of {\it classical} solutions, where 
$\phi_U$ is a moment-type functional. 
On the other hand, in the case of degenerate diffusion ($\delta=0$) 
the classical solutions are not always obtained, 
and so we are concerned with a framework of {\it weak} solutions in such a case. 
Thus the corresponding inequality for $\phi$ is not directly established 
in \eqref{P0} with $\delta=0$. 
To overcome this difficulty, we will derive an integral inequality for $\phi_U$ 
to show blow-up of solutions. 
To this end, we will also consider \eqref{P0} with nondegenerate diffusion 
$(\delta\in(0,1])$ as a problem approximate to the model with degenerate diffusion. 
Now we briefly state our main results as follows: 

\begin{itemize}
\item {\bf In the case of nondegenerate diffusion \boldmath{($\delta\in(0,1]$)} [Theorem \ref{thm1}]:} If 
  $n\ge5$ and 
    \begin{align}\label{introcondi1}
      m_1<2-\frac{4}{n}, \quad 
\chi_1>\frac{n(2-m_1)}{n(2-m_1)-4}\cdot\max\{ 1,a_1 \}\mu_1
      \quad \mbox{and} \quad \chi_2>\mu_2a_2
    \end{align}
  or
    \begin{align}\label{introcondi2}
      m_2<2-\frac{4}{n}, \quad 
      \chi_1>\mu_1a_1\quad \mbox{and} \quad 
      \chi_2>\frac{n(2-m_2)}{n(2-m_2)-4}\cdot\max\{ 1,a_2 \}\mu_2,
    \end{align}
then there exist initial data such that the corresponding classical solution
blows up at some finite-time $T^\star$ in the sense that 
$\lim_{t \nearrow T^\star} 
( \| u(\cdot,t) \|_{L^\infty(\Omega)} + \| v(\cdot,t) \|_{L^\infty(\Omega)} ) = \infty$.

\item {\bf In the case of degenerate diffusion \boldmath{($\delta=0$)} 
[Theorem \ref{thm2}]:} If $n\ge5$ 
and the condition \eqref{introcondi1} or \eqref{introcondi2} is satisfied, 
then there exist initial data such that the corresponding weak solution
blows up at some finite-time 
$\widetilde{T}_{\rm max}$ in the sense that 
$\limsup_{t \nearrow \widetilde{T}_{\rm max}}
(\|u(t)\|_{L^\infty(\Omega)} + \|v(t)\|_{L^\infty(\Omega)})=\infty$.
\end{itemize}

\medskip

\noindent{\bf Strategy of the proof.} In Section \ref{nondege} we consider the case of nondegenerate diffusion ($\delta\in(0,1]$). 
Following \cite{F_2021_optimal, W-2018}, we will introduce 
the moment-type functional 
$\phi_U(t) := \int^{s_0}_0 s^{-b}(s_0-s) U(s,t)\,ds$ for $t>0$ with
some $b\in(1,2)$ and $s_0\in(0,R^n)$, where
$U(s,t) := \int^{s^\frac{1}{n}}_0 \rho^{n-1} u(\rho,t)\,d\rho$ for 
$s\in[0,R^n]$ and $t>0$.
The goal is to derive the inequality 
$\phi_U'(t)\ge C\phi_U^2(t)$ with some $C>0$, which 
ensures that a maximal existence time is finite.
To obtain this inequality we will derive the key inequalities
  \[
    U_s(s,t)\le \frac{U(s,t)}{s}
    \quad\mbox{and}\quad
    V_s(s,t)\le \frac{V(s,t)}{s}
  \]
(see Lemma \ref{increase}), 
which enable us to control the terms 
coming from competition term. 
In the proof of Lemma \ref{increase} we will show from 
radially nonincreasing initial data that 
$U_s$ and $V_s$ are radially nonincreasing. 
The method is based on that of \cite[Lemma 5.2]{MTY_2022}, 
but it is the cornerstone of the proof to estimate integrals 
derived from the diffusion term.

Section \ref{dege} is devoted to proving blow-up of solutions 
in the case of degenerate diffusion ($\delta=0$). 
In order to show finite-time blow-up we follow \cite{TY_2022_DCDS-B}, and so 
our goal is to construct a {\it moment solution}, 
which is a weak solution satisfying the moment inequality 
  \[
    \phi_U(t) - \phi_U(0) \ge K \int^t_0 \phi_U^2(\tau)\,d\tau
  \]
with some $K>0$. 
To this end, we consider an approximate problem with nondegenerate diffusion. 
Then, from Section \ref{nondege} we have 
the moment inequality for approximate solutions, 
$\phi_{U_\ep}(t) - \phi_{U_\ep}(0) \ge K \int^t_0 \phi_{U_\ep}^2(\tau)\,d\tau$ 
on $[0,T_{\rm max,\ep})$. Here, 
$\ep\in(0,1)$ is the approximate parameter and $T_{\rm max,\ep}$ is 
the maximal existence time for approximate solutions. 
As to this inequality, if we let $\ep\to0$ formally, then it seems to obtain 
the desired moment inequality. 
However, there is a possibility that $T_{\rm max,\ep}\to0$ as $\ep\to0$. 
The key to overcoming this difficulty is to find 
$T_0\in(0,\infty)$ and $K_0>0$ such that for any $\ep\in(0,1)$,
  \[
    T_0\le T_{\rm max,\ep}
    \quad\mbox{and}\quad
    \|u_\ep(\cdot,t)\|_{L^\infty(\Omega)} 
    + \|v_\ep(\cdot,t)\|_{L^\infty(\Omega)} \le K_0
    \quad\mbox{for all } t\in[0,T_0)
  \]
(see Lemma \ref{lem3.3}). Thanks to these inequalities, 
we can see that $T_{\rm max,\ep}$ does not vanish 
and be concerned with convergence of approximate solutions 
on $(0,T_0)$. 
Consequently, we can construct a moment solution.

\section{The case of nondegenerate diffusion}\label{nondege}
In this section we deal with \eqref{P0} with $\delta\in(0,1]$ as follows: 
\begin{align}\label{P1}
  \begin{cases}
    u_t = \Delta (u+\delta)^{m_1} - \chi_1 \nabla\cdot(u\nabla w)
          + \mu_1 u (1-u-a_1v),
    &x\in\Omega,\ t>0,\\
    v_t = \Delta (v+\delta)^{m_2} - \chi_2 \nabla\cdot(v\nabla w)
            + \mu_2 v (1-a_2u-v),
    &x\in\Omega,\ t>0,\\
    0 = \Delta w +u+v-\ol{M}(t), 
    &x\in\Omega,\ t>0,\\
    \nabla u \cdot\nu = \nabla v \cdot\nu = \nabla w \cdot\nu = 0, 
    &x\in\pa\Omega,\ t>0,\\
    u(x,0)=u_0(x), \ \quad v(x,0)=v_0(x), 
    &x\in\Omega, 
\end{cases}
\end{align}
with $\int_\Omega w(x,t)\,dx=0$, $t>0$,
where $\Omega = B_R(0) \subset \mathbb{R}^n$ $(n\ge5)$ 
is a ball with some $R>0$;
$m_1,m_2>1$, $\chi_1,\chi_2,\mu_1,\mu_2,a_1,a_2>0$; 
$u_0,v_0\in C^0(\ol{\Omega})$;
$\ol{M}(t):=\frac{1}{|\Omega|}\int_\Omega (u(x,t)+v(x,t))\,dx$ 
for $t>0$.

The main theorem in this section reads as follows.

\begin{thm}\label{thm1}
Let $\Omega = B_R(0) \subset \mathbb{R}^n$ $(n\ge5)$ 
and let $m_1,m_2>1$, $\chi_1, \chi_2, a_1, a_2, \mu_1,\mu_2>0$ 
and $\delta\in(0,1]$.
Assume that 
  \begin{align}\label{condim}
    m_1<2-\frac{4}{n} 
  \end{align}
and
  \begin{align}\label{condi}
    \chi_1>\frac{n(2-m_1)}{n(2-m_1)-4}\cdot\max\{ 1,a_1 \}\mu_1
    \quad \mbox{and} \quad \chi_2>\mu_2a_2.
  \end{align}
    Then, for all $M_0>0$ and $\widetilde{M}_0 \in (0,M_0)$ 
    there exists $r_0 \in (0,R)$ with the following property\/{\rm :}
    If $u_0$ and $v_0$ satisfy that 
  \begin{align}\label{initial1}
    u_0, v_0\in C^0(\ol{\Omega})\mbox{ are nonnegative, radially symmetric and radially nonincreasing}
  \end{align}
    as well as
      \begin{align}\label{initial2} 
        \int_\Omega u_0(x)\,dx \le M_0 
        \quad \mbox{and} \quad 
        \int_{B_{r_0}(0)} u_0(x)\,dx \ge \widetilde{M}_0, 
      \end{align}
    then the corresponding solution $(u,v,w)$ of \eqref{P1} 
    blows up at some finite time $T^\star>0$
    in the sense that 
      \begin{align}\label{blowupuv}
        \lim_{t \nearrow T^\star} 
        ( \| u(\cdot,t) \|_{L^\infty(\Omega)} + \| v(\cdot,t) \|_{L^\infty(\Omega)} )
        = \infty. 
      \end{align}
\end{thm}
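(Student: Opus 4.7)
The plan is to adapt the moment argument from \cite{F_2021_optimal,MTY_2022,W-2018} to the porous-medium-type diffusion. Standard parabolic theory for the nondegenerate system \eqref{P1} yields a unique classical radial solution $(u,v,w)$ on a maximal interval $[0,T_{\rm max})$; assume by contradiction that \eqref{blowupuv} fails, so that $T_{\rm max}=\infty$. Introduce the mass functions
\[
U(s,t):=\int_0^{s^{1/n}}\rho^{n-1}u(\rho,t)\,d\rho,\qquad V(s,t):=\int_0^{s^{1/n}}\rho^{n-1}v(\rho,t)\,d\rho,
\]
so that $u=nU_s$, $v=nV_s$, and integrating the $w$-equation yields $r^{n-1}w_r=\ol{M}(t)s/n-(U+V)$ with $s=r^n$. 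A direct computation then rewrites the first equation of \eqref{P1} as
\[
U_t = m_1 n^2 s^{2-2/n}(nU_s+\delta)^{m_1-1}U_{ss} + n\chi_1 U_s(U+V) - \chi_1\ol{M}(t)\,sU_s + \mu_1 U - n\mu_1\!\int_0^s\! U_\sigma^2\,d\sigma - n\mu_1 a_1\!\int_0^s\! U_\sigma V_\sigma\,d\sigma,
\]
and symmetrically for $V$. Following \cite{W-2018}, I fix $b\in(1,2)$ close to $2$ and $s_0\in(0,R^n)$ small, and set $\phi_U(t):=\int_0^{s_0}s^{-b}(s_0-s)U(s,t)\,ds$. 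The goal is the superlinear ODI $\phi_U'(t)\ge C\phi_U(t)^2$ on $[0,T_{\rm max})$; combined with the lower bound on $\phi_U(0)$ produced by concentrating $u_0$ in $B_{r_0}(0)$ via \eqref{initial2}, a standard comparison with $y'=Cy^2$ forces a finite blow-up time, contradicting $T_{\rm max}=\infty$.

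The cornerstone of the derivation is the monotonicity estimate
\[
U_s(s,t)\le\frac{U(s,t)}{s},\qquad V_s(s,t)\le\frac{V(s,t)}{s}\qquad\bigl(s\in(0,R^n),\ t\in[0,T_{\rm max})\bigr),
\]
which is equivalent to preservation of the radial monotonicity of $u$ and $v$ under the flow \eqref{P1}. I would establish it along the lines of \cite[Lemma 5.2]{MTY_2022}: the quantities $sU_s-U$ and $sV_s-V$ vanish at $s=0$, are nonpositive at $t=0$ by \eqref{initial1}, and, after differentiating the $U$- and $V$-equations in $s$, can be shown to satisfy a linear parabolic inequality with coefficients that stay bounded thanks to classical regularity of $(u,v,w)$ in the nondegenerate regime $\delta>0$; the maximum principle then delivers the sign. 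The new ingredient compared with the linear case of \cite{MTY_2022} is that the porous-medium contribution to the linearization contains $(m_1-1)(nU_s+\delta)^{m_1-2}$, which has to be carefully absorbed into the lower-order part.

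Differentiating $\phi_U$ in $t$ and inserting the $U$-equation, three families of terms appear. The chemotaxis drift, after one integration by parts of $n\chi_1 U_s U$, produces the dominant quadratic contribution
\[
\frac{n\chi_1}{2}\int_0^{s_0}\bigl(s^{-b}(s_0-s)\bigr)_s U^2\,ds,
\]
while the cross-term $n\chi_1 U_s V$ and the competition integral $-n\mu_1 a_1\int_0^s U_\sigma V_\sigma\,d\sigma$ are controlled, via the monotonicity estimate, by a constant multiple of the same $U^2$-integral weighted by $\max\{1,a_1\}$. The porous-medium term, after two integrations by parts and the pointwise bound $nU_s+\delta\le nU/s+\delta$ supplied by the same estimate, contributes a loss of the form $C\int_0^{s_0}s^{2-2/n-b}U^{m_1}\,ds$ plus an additive $\delta^{m_1}$-tail that is uniformly bounded in $\delta\in(0,1]$. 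The first inequality in \eqref{condi} is exactly the strict condition needed to absorb these losses into the chemotaxis gain in the limit $b\uparrow 2$, and the second inequality $\chi_2>\mu_2 a_2$ plays the parallel role in the analogous estimate for the $V$-equation, ensuring that the mass of $v$ on which the control of the cross-term relies is not destroyed by its own competition contribution.

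The main obstacle is the balancing of two competing exponent constraints. For the H\"older chain turning $\int_0^{s_0}s^{2-2/n-b}U^{m_1}\,ds$ into a power of $\phi_U$ to be valid and integrable at $s=0$, one needs $b<3-2/n-m_1$; for the chemotaxis step to produce exponent $2$ on $\phi_U$ one needs $b$ as close to $2$ as possible. These two constraints are simultaneously feasible precisely when $3-2/n-m_1>2$, that is, under \eqref{condim}, and the sharp ratio produced by the H\"older optimization as $b\uparrow 2$ is exactly $\frac{n(2-m_1)}{n(2-m_1)-4}$, which is where that numerical factor in \eqref{condi} comes from. An additional bookkeeping task is to track every constant uniformly in $\delta\in(0,1]$, since the resulting estimates will be reused in Section~\ref{dege} to pass to the $\delta\to 0$ limit and construct a moment solution for the degenerate problem.
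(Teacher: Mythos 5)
Your overall architecture coincides with the paper's: the same moment functional $\phi_U$, the same cornerstone monotonicity estimate $U_s\le U/s$, $V_s\le V/s$ used to tame the competition terms, and the same endgame (superquadratic ODI plus a lower bound on $\phi_U(0)$ from \eqref{initial2}). The paper proves the monotonicity not by a maximum principle for $sU_s-U$ but by a Gr\"onwall estimate for $\int_0^R r\bigl[(u_r)_+^2+(v_r)_+^2\bigr]$, where the genuinely new work is integrating by parts the third--order terms $[(u+\delta)^{m_1}]_{rrr}$; your maximum--principle route would additionally have to justify applicability to the \emph{coupled} system for $(u_r,v_r)$ (the cross terms $(\chi_1-a_1\mu_1)u\,v_r$ and $(\chi_2-a_2\mu_2)v\,u_r$ are where $\chi_1>\mu_1a_1$ and $\chi_2>\mu_2a_2$ enter, making the coupling cooperative), which your sketch does not address.

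The genuine defect is in your exponent bookkeeping, which as written makes the argument vacuous. You claim the diffusion term forces $b<3-\tfrac2n-m_1$ and that feasibility requires $3-\tfrac2n-m_1>2$; since $m_1>1$, this is never satisfied, so by your own criterion the proof would never close. The correct constraint coming from the term $\int_0^{s_0}s^{2-\frac2n-b}(s_0-s)(nU_s+\delta)^{m_1-1}U_{ss}\,ds$ is $b<2-\tfrac{4}{n(2-m_1)}$, which leaves a nonempty subinterval of $(1,2)$ exactly under \eqref{condim}. Moreover, you attribute the factor $\tfrac{n(2-m_1)}{n(2-m_1)-4}$ to a ``H\"older optimization as $b\uparrow 2$'', but $b$ cannot approach $2$: the logistic and competition integrals, estimated via $U_s\le U/s$ and $V_s\le V/s$, each carry a factor $\tfrac{1}{b-1}$, so one needs $\chi_1-\tfrac{\max\{1,a_1\}\mu_1}{b-1}>0$, i.e.\ $b>1+\tfrac{\max\{1,a_1\}\mu_1}{\chi_1}$. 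Condition \eqref{condi} is precisely the compatibility of this lower bound with the upper bound $2-\tfrac{4}{n(2-m_1)}$; your sketch conflates the two sources and so misidentifies where both $\max\{1,a_1\}$ and the ratio come from. (Also, a small sign slip: integration by parts of $n\chi_1\int s^{-b}(s_0-s)UU_s$ yields $-\tfrac{n\chi_1}{2}\int\bigl(s^{-b}(s_0-s)\bigr)_sU^2$, which is the positive quantity you want; the paper instead keeps $\psi_U$ and invokes a lower bound $\psi_U\ge c\,s_0^{-(3-b)}\phi_U^2$.) Your emphasis on tracking all constants uniformly in $\delta\in(0,1]$ is correct and is indeed what the paper records in Remark 2.1 for later use.
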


\begin{remark}
If $m_2, \chi_1, \chi_2, a_1, a_2, \mu_1$ and $\mu_2$ satisfy
  \[
    m_2<2-\frac{4}{n} \quad\mbox{and}\quad 
    \chi_1>\mu_1a_1
    \quad\mbox{as well as}\quad
    \chi_2>\frac{n(2-m_2)}{n(2-m_2)-4}\cdot\max\{ 1,a_2 \}\mu_2,
  \]
then, by replacing $u_0$ with $v_0$ in \eqref{initial2} 
the same conclusion holds.
\end{remark}

\begin{remark}
In \eqref{blowupuv} it is not clear whether 
$u$ and $v$ blow up simultaneously or not.
\end{remark}

Before showing this theorem, we state a result on local existence of solutions to \eqref{P1}, 
which is obtained by a standard fixed point argument as in \cite{C-W, STW}.

\begin{lem}\label{LS}
Assume that $u_0,v_0\in C^0(\ol{\Omega})$ are nonnegative.
Then there exist $T_{\rm max} \in (0,\infty]$ and a unique triplet
$(u,v,w)$ of functions 
$u,v,w \in C^0(\overline{\Omega} \times [0,T_{\rm max})) 
                 \cap C^{2,1}(\overline{\Omega} \times (0,T_{\rm max}))$,
which solves \eqref{P1} with $\int_\Omega w(x,t)\,dx=0$, $t>0$ classically.
Also, $u,v\ge0$ in $\Omega \times (0,T_{\rm max})$ and 
  \[
    \mbox{if}\ T_{\rm max}<\infty, 
    \quad \mbox{then} \quad
    \lim_{t \nearrow T_{\rm max}} 
      ( \| u(\cdot,t) \|_{L^\infty(\Omega)} + \| v(\cdot,t) \|_{L^\infty(\Omega)} )
    = \infty.
  \]
Furthermore, if $u_0, v_0$ are radially symmetric, then so are $u,v,w$ 
for any $t \in (0,T_{\rm max})$.
\end{lem}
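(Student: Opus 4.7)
The plan is to implement a standard Banach fixed point argument as in \cite{C-W, STW}. On a short time interval $[0,T]$, consider a closed ball $B$ in $X_T := C^0(\ol{\Omega}\times[0,T];\mathbb{R}^2)$ centered at $(u_0,v_0)$, and define a solution map $\Phi:B\to X_T$ by $\Phi(\tilde u,\tilde v):=(u,v)$ as follows. First solve the elliptic problem
\[
0=\Delta w+\tilde u+\tilde v-\ol{M}(t),\qquad \int_\Omega w(\cdot,t)\,dx=0,\qquad \nabla w\cdot\nu=0\text{ on }\pa\Omega,
\]
which is uniquely solvable by Fredholm theory, since the compatibility condition is built into the definition of $\ol{M}(t)$; standard elliptic regularity yields $\nabla w(\cdot,t)$ H\"older-continuous uniformly in $t$, with norm controlled by $\|\tilde u+\tilde v\|_{L^\infty}$. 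With this $w$ frozen, solve the two decoupled quasilinear Neumann problems
\[
u_t=\Delta(u+\delta)^{m_1}-\chi_1\nabla\cdot(u\nabla w)+\mu_1 u(1-\tilde u-a_1\tilde v)
\]
and its analogue for $v$, with initial data $u_0,v_0$; since $\delta>0$ makes the diffusion uniformly parabolic on any bounded range, classical parabolic theory yields unique solutions $u,v\in C^{2,1}(\ol{\Omega}\times(0,T])$ with H\"older estimates.

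For $T$ small enough (depending on $u_0,v_0$ and $\delta$), $\Phi$ maps $B$ into itself and is a contraction in the $C^0$-norm: Lipschitz dependence of $w$ on $(\tilde u,\tilde v)$ via the elliptic estimate, combined with standard continuous-dependence estimates for the parabolic problem, delivers the contractivity, using boundedness of $\nabla w$ and the Lipschitz structure of the reaction. Banach's fixed point theorem then yields a unique classical solution $(u,v,w)$ of \eqref{P1} on $[0,T]$; defining $T_{\rm max}$ as the supremum of such $T$ gives the maximal existence time $T_{\rm max}\in(0,\infty]$. The extensibility alternative is obtained by a standard continuation argument: were $\|u(\cdot,t)\|_{L^\infty}+\|v(\cdot,t)\|_{L^\infty}$ to remain bounded up to some finite $T_{\rm max}$, the elliptic estimate would bound $\nabla w$, parabolic Schauder estimates would then provide uniform H\"older bounds for $u$ and $v$ near $T_{\rm max}$, and the fixed point argument could be restarted to extend the solution past $T_{\rm max}$, contradicting maximality.

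Nonnegativity of $u,v$ follows from the maximum principle applied to the equations written in divergence form
\[
u_t-\nabla\cdot\bigl(m_1(u+\delta)^{m_1-1}\nabla u-\chi_1 u\nabla w\bigr)=\mu_1 u(1-u-a_1 v),
\]
for which $u\equiv 0$ is a subsolution; comparison with nonnegative data gives $u\ge 0$, and the same argument applies to $v$. Radial symmetry is preserved by uniqueness: for any rotation $R\in O(n)$, the triple $(u\circ R,v\circ R,w\circ R)$ solves the same problem with the same (radial) initial data, so it must coincide with $(u,v,w)$. The main technical obstacle is the contraction estimate for $\Phi$, since the quasilinear diffusion coefficient $m_i(\cdot+\delta)^{m_i-1}$ depends on the unknown itself and the chemotactic drift $\chi_i\nabla w$ depends nonlocally on $(\tilde u,\tilde v)$; it is precisely here that the nondegeneracy $\delta>0$ is essential, and the argument breaks down for $\delta=0$, which is why Section \ref{dege} must work in the weaker setting of moment solutions.
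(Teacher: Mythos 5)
Your proposal follows the same route the paper takes: the paper gives no proof of Lemma \ref{LS} beyond noting that it "is obtained by a standard fixed point argument as in \cite{C-W, STW}," and your sketch (freeze $(\tilde u,\tilde v)$, solve the zero-mean Neumann problem for $w$, solve the uniformly parabolic problems for $u,v$, contract for small $T$, then continuation, comparison for nonnegativity, and uniqueness plus rotation invariance for radial symmetry) is precisely that standard argument. The sketch is correct at the level of detail the paper itself operates at.
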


Through this section we fix $M_0>0$, 
$m_1,m_2>1$, $\chi_1,\chi_2,\mu_1,\mu_2,a_1,a_2>0$ and $\delta\in(0,1]$
as well as initial data $u_0$ and $v_0$ satisfying \eqref{initial1} and 
$\int_\Omega u_0\le M_0$.
Moreover, we denote the corresponding solution of \eqref{P1} 
provided by Lemma \ref{LS} by $(u,v,w)$ 
and its maximal existence time by $T_{\rm max}$.
Referring to \cite{F_2021_optimal, W-2018}, 
we define the mass accumulation functions $U$ and $V$ as 
  \begin{align}\label{UV}
    U(s,t) := \int^{s^\frac{1}{n}}_0 \rho^{n-1} u(\rho,t)\,d\rho,\quad 
    V(s,t) := \int^{s^\frac{1}{n}}_0 \rho^{n-1} v(\rho,t)\,d\rho
  \end{align}
for $s\in[0,R^n]$ and $t\in[0,T_{\rm max})$, 
and also introduce the moment-type functional $\phi_U$ as 	
  \begin{align}\label{phiU}
    \phi_U(t) := \int^{s_0}_{0} s^{-b}(s_0-s) U(s,t)\,ds
  \end{align}
and the functional $\psi_U$ as
  \[
    \psi_U(t) := \int^{s_0}_0 s^{-b}(s_0-s) U(s,t) U_s(s,t)\,ds
  \]
for $t \in [0,T_{\rm max})$ 
with suitably chosen $s_0 \in (0,R^n)$ and $b \in (1,2)$. 
The definition \eqref{UV} reveals that $U(0,t)=0$, so that 
by the mean value theorem we see that for any $t_0\in(0,T_{\rm max})$, 
$U(s,t)\le \|U_s\|_{L^\infty((0,R)\times(0,t_0))}\cdot s$ for all $s\in[0,R^n]$ and $t\in[0,t_0]$. 
Hence, as in \cite[Lemma 4.1]{F_2021_optimal}, we can make sure that 
$\phi_U\in C^0([0,T_{\rm max})) \cap C^1((0,T_{\rm max}))$.
We first state an inequality for $\phi_U'$ 
given by a straightforward computation.

\begin{lem}\label{lem2.3}
Let $s_0 \in (0,R^n)$ and $b \in (1,2)$. 
Then for all $t \in (0,T_{\rm max})$,
  \begin{align}\label{ineqphiU}
    \phi_U'(t) &\ge m_1n^2 \int^{s_0}_0 s^{2-\frac{2}{n}-b}(s_0-s) (nU_s(s,t)+\delta)^{m_1-1}U_{ss}(s,t)\,ds
                          + \chi_1 n
                             \psi_U(t)
  \notag \\ 
  &\quad\,
                          + \chi_1 n\int^{s_0}_0 s^{-b}(s_0-s) U_s(s,t)V(s,t)\,ds
                          - \chi_1 \overline{M}(t) \int^{s_0}_0 s^{1-b}(s_0-s) U_s(s,t)\,ds
  \notag \\ 
  &\quad\,
                          - \mu_1 n
                               \int^{s_0}_0 s^{-b}(s_0-s) 
                                 \left( \int^s_0 U_s^2(\sigma,t)\,d\sigma \right)\,ds
  \notag \\ 
  &\quad\,
                          - \mu_1 a_1 n 
                               \int^{s_0}_0 s^{-b}(s_0-s) 
                                 \left( \int^s_0 U_s(\sigma,t)V_s(\sigma,t)\,d\sigma \right)\,ds \notag \\
  &=: I_1+I_2+I_3+I_4+I_5+I_6.
  \end{align}
\end{lem}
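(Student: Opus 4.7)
\textbf{Proof plan for Lemma~\ref{lem2.3}.} The strategy is to transform the radial PDE for $u$ into a scalar equation for the mass-accumulation function $U(s,t)$ via the substitution $s=\rho^n$, then differentiate $\phi_U$ under the integral sign, and finally drop one nonnegative term.

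From the definition \eqref{UV} one reads off
\[
 U_s(s,t)=\tfrac{1}{n}u(s^{1/n},t), \qquad u_\rho(\rho,t)=n^2\rho^{n-1}U_{ss}(\rho^n,t),
\]
together with the analogous identities for $v,V$. The radial form of the Laplacian and integration of the third equation of \eqref{P1} yield $\Delta(u+\delta)^{m_1}=\rho^{1-n}\left(\rho^{n-1}m_1(u+\delta)^{m_1-1}u_\rho\right)_\rho$ and
\[
 \rho^{n-1}w_\rho(\rho,t)=-U(\rho^n,t)-V(\rho^n,t)+\tfrac{\ol{M}(t)}{n}\rho^n.
\]
I would then compute $U_t(s,t)=\int_0^{s^{1/n}}\rho^{n-1}u_t(\rho,t)\,d\rho$. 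The divergence-form diffusion and chemotaxis terms collapse to a single boundary value at $\rho=s^{1/n}$, since the contributions at $\rho=0$ vanish (as $\rho^{n-1}u_\rho$ and $\rho^{n-1}uw_\rho$ both tend to $0$ by classical regularity of $u$ and $w$ at the origin), whereas the logistic terms become integrals on $(0,s)$ after the change of variable $\rho^{n-1}d\rho=d\sigma/n$ together with $u=nU_s$. Collecting,
\begin{align*}
 U_t(s,t)
 &= m_1n^2s^{2-\frac{2}{n}}(nU_s+\delta)^{m_1-1}U_{ss}+\chi_1 n U_s(U+V)-\chi_1\ol{M}(t)\,sU_s\\
 &\quad +\mu_1 U-\mu_1 n\int_0^s U_s^2(\sigma,t)\,d\sigma-\mu_1 a_1 n\int_0^s U_s(\sigma,t)V_s(\sigma,t)\,d\sigma.
\end{align*}

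Finally, multiplying by the nonnegative weight $s^{-b}(s_0-s)$ and integrating over $s\in(0,s_0)$, the chemotaxis piece $\chi_1 n\int_0^{s_0}s^{-b}(s_0-s)U_s U\,ds$ is, by definition, exactly $\chi_1 n\,\psi_U(t)=I_2$, while the remaining pieces reproduce $I_1,I_3,I_4,I_5,I_6$ verbatim. Dropping the nonnegative contribution $\mu_1\int_0^{s_0}s^{-b}(s_0-s)U(s,t)\,ds\ge 0$ then turns the resulting identity into the asserted inequality. The only technical point is integrability near $s=0$: since $b\in(1,2)$ makes $s^{-b}$ singular on its own, one uses that the $C^2$-regularity of $u$ at the spatial origin forces $U(s,t)\lesssim s$ and keeps $s^{2-2/n}U_{ss}$ locally uniformly bounded, so each integrand is of order at most $s^{1-b}$ near the origin. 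Differentiation under the integral sign on $(0,T_{\rm max})$ is then justified exactly as in \cite[Lemma~4.1]{F_2021_optimal}; beyond this integrability check, the proof is pure algebraic substitution and no estimates are needed at this stage.
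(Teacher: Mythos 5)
Your computation is correct and is essentially the paper's own argument: the paper's proof of Lemma~\ref{lem2.3} simply defers to the transformation carried out in \cite[p.19]{MTY_2022} (substituting the third equation into the first, passing to the mass-accumulation variable $s=\rho^n$, and discarding the nonnegative term $\mu_1\int_0^{s_0}s^{-b}(s_0-s)U\,ds$), which is exactly what you have written out explicitly, with the diffusion boundary term correctly adapted to the nonlinear case as $m_1n^2s^{2-\frac{2}{n}}(nU_s+\delta)^{m_1-1}U_{ss}$. No gap.
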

\begin{proof}
Using the third equation in \eqref{P1}, we transform the first equation in \eqref{P1} exactly as in \cite[p.19]{MTY_2022}
(there only for linear diffusion) to arrive at \eqref{ineqphiU}.
\end{proof}

In order to estimate $I_5$ and $I_6$ we derive the key inequalities 
$U_s\le \frac{U}{s}$ and $V_s\le \frac{V}{s}$. 
The method is based on \cite[Lemma 5.2]{MTY_2022}, but 
in our case one of the challenging terms to estimate arises from 
the diffusion term.

\begin{lem}\label{increase}
If $\chi_1$ and $\chi_2$ satisfy that 
  \begin{align}\label{mu1mu2}
    \chi_1>\mu_1a_1
    \quad \mbox{and} \quad
    \chi_2>\mu_2a_2,
  \end{align}
then 
  \begin{align}\label{mean}
    U_s(s,t) \le \frac{U(s,t)}{s}
      \quad \mbox{and} \quad 
    V_s(s,t) \le \frac{V(s,t)}{s}
  \end{align}
for all $s \in (0,R^n)$ and $t \in [0,T_{\rm max})$. 
\end{lem}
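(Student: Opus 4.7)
The plan is to prove the stronger pointwise statements $U_{ss}(\cdot,t)\le 0$ and $V_{ss}(\cdot,t)\le 0$ on $(0,R^n)$ for every $t\in[0,T_{\rm max})$; \eqref{mean} then follows because $U_s(\cdot,t)$ is nonincreasing in $s$, so $U(s,t)=\int_0^s U_s(\sigma,t)\,d\sigma\ge s\,U_s(s,t)$, and symmetrically for $V$. Via the identity $U_s(s,t)=n^{-1}u(s^{1/n},t)$, these inequalities say precisely that $u(\cdot,t)$ and $v(\cdot,t)$ remain radially nonincreasing; at $t=0$ this is the content of \eqref{initial1}, so the task reduces to propagating radial monotonicity in time.

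Following \cite[Lemma~5.2]{MTY_2022}, I would first rewrite \eqref{P1} in mass-accumulation coordinates: integrating the third equation yields $r^{n-1}w_r(r,t)=-(U+V)(s,t)+\tfrac{s}{n}\overline{M}(t)$, and substituting back produces closed one-dimensional parabolic equations for $U$ and $V$ on $(0,R^n)$. Differentiating these once in $s$ gives, for $z:=U_s$ and $\zeta:=V_s$, quasilinear parabolic equations whose reaction-plus-chemotaxis contributions include the cross terms $(\chi_1-\mu_1a_1)\,n\,z\zeta$ in the $z$-equation and $(\chi_2-\mu_2a_2)\,n\,z\zeta$ in the $\zeta$-equation. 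Since $z,\zeta\ge 0$, hypothesis \eqref{mu1mu2} puts both of these on the favorable side and accounts for exactly why the lemma needs $\chi_1>\mu_1 a_1$ together with $\chi_2>\mu_2 a_2$: this is the cancellation that neutralizes the cross-competition mechanism that would otherwise seed positivity in $z_s$ or $\zeta_s$.

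The propagation then proceeds by an integral argument on $(z_s)_+$ and $(\zeta_s)_+$: differentiating the $z$- and $\zeta$-equations once more in $s$, testing against $(z_s)_+$ and $(\zeta_s)_+$ with a suitable $s$-weight $\varphi$, and coupling the two resulting identities yields, after integrations by parts, a differential inequality of the shape
\[
\frac{d}{dt}\!\int_0^{R^n}\!\big((z_s)_+^2+(\zeta_s)_+^2\big)\varphi(s)\,ds
\;\le\; C\!\int_0^{R^n}\!\big((z_s)_+^2+(\zeta_s)_+^2\big)\varphi(s)\,ds.
\]
Since $(z_s)_+(\cdot,0)\equiv(\zeta_s)_+(\cdot,0)\equiv 0$ by the initial monotonicity, Gronwall delivers the conclusion.

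The main obstacle --- the step the author identifies as the cornerstone of the proof --- is controlling the integrals coming from the differentiated nonlinear diffusion $\partial_s\!\big[m_1 n^2 s^{2-\frac{2}{n}}(nz+\delta)^{m_1-1}z_s\big]$ when tested against $(z_s)_+$. Integration by parts provides a genuinely dissipative leading contribution, but also lower-order terms of the form $(m_1-1)(nz+\delta)^{m_1-2}(z_s)^2 s^{2-\frac{2}{n}}$ that are quadratic in $z_s$ and must be absorbed into the dissipative part via Young's inequality. This absorption hinges on the uniform lower bound $(nz+\delta)^{m_1-1}\ge\delta^{m_1-1}>0$ supplied by the nondegeneracy $\delta>0$, which is precisely why a direct argument is unavailable in the degenerate case $\delta=0$ and the approximation scheme of Section~\ref{dege} becomes necessary there.
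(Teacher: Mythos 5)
Your overall strategy coincides with the paper's: reduce \eqref{mean} to the preservation of radial monotonicity ($U_{ss}\le 0$ and $V_{ss}\le0$, equivalently $u_r\le 0$ and $v_r\le0$), run a Gronwall argument on the positive parts of the relevant derivatives starting from \eqref{initial1}, and use the sign condition \eqref{mu1mu2} to make the cross-competition terms controllable. The paper carries this out in the original radial variable $r$, testing the differentiated equations against $r(u_r)_+$ and $r(v_r)_+$; your cross term $(\chi_1-\mu_1a_1)nz\zeta$ appears there as the coefficient $D_1(r,t)=(\chi_1-a_1\mu_1)u\ge0$ multiplying $v_r$, and the sign is needed precisely so that $(u_r)_+D_1v_r\le(u_r)_+D_1(v_r)_+$ can be bounded by $\int_0^R r[(u_r)_+^2+(v_r)_+^2]$, since $(v_r)_-$ is not controlled by that functional. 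Up to the change of variables $s=r^n$ this is the same argument, so the difference in coordinates is cosmetic.

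However, your account of the diffusion term --- the step you correctly single out as the main obstacle --- does not match what is needed and, as stated, would not go through. You cannot absorb an undifferentiated quadratic term $\int a\,(z_s)_+^2$ into a dissipation of the form $\int b\,[\partial_s(z_s)_+]^2$ by Young's inequality alone; that would require a Poincar\'e-type inequality which is not available here. In the paper the dissipative contribution $-m_1\int_0^R r(u+\delta)^{m_1-1}[(u_r)_+]_r^2$ is simply discarded, and every remaining term coming from $[(u+\delta)^{m_1}]_{rrr}$ and $[(u+\delta)^{m_1}]_{rr}$ is bounded by $c(T)\int_0^R r(u_r)_+^2$ using (i) $L^\infty((0,R)\times(0,T))$ bounds on $u$, $u_r$ and $u_{rr}$, valid for the classical solution on each $T<T_{\rm max}$ after approximating the initial data in $C^2(\ol{\Omega})$ with homogeneous Neumann data, (ii) the identity $u_r(0,t)=0$ together with the mean value theorem, which converts integrands lacking the weight $r$ into ones carrying it, and (iii) upper bounds on the negative powers $(u+\delta)^{m_1-2}$ and $(u+\delta)^{m_1-3}$. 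It is here, and not through a lower bound $(nz+\delta)^{m_1-1}\ge\delta^{m_1-1}$ on the diffusivity, that the nondegeneracy $\delta>0$ enters: it keeps those negative powers finite and underwrites the classical regularity (existence and boundedness of $u_{rr}$, $u_{rrr}$) on which the whole computation rests. Replacing your Young-absorption step by these elementary $L^\infty$ estimates repairs the argument; the remainder of your proposal is sound.
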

\begin{proof}
The goal is to show that $U_{ss}(s,t)\le0$ and $V_{ss}(s,t)\le0$ 
for all $s \in (0,R^n)$ and $t \in [0,T_{\rm max})$,
because \eqref{mean} is obtained   
by this inequalities and the mean value theorem.
Noting from \eqref{UV} that $U_{ss}=\frac{1}{n^2}s^{\frac{1}{n}-1}u_r$ and 
$V_{ss}=\frac{1}{n^2}s^{\frac{1}{n}-1}v_r$,
we prove that $u_r \le 0$ and $v_r \le 0$ 
for all $r \in (0,R)$ and $t \in (0,T_{\rm max})$. 
To this end, we establish the inequality 
  \[
    \frac{d}{dt}\int^R_0 r[(u_r)_+^2+(v_r)_+^2] \le \widetilde{c}\int^R_0r[(u_r)_+^2 + (v_r)_+^2],
  \]
where $\widetilde{c}>0$ depends on time.
From an approximation argument 
as in Step 2 in the proof of \cite[Lemma 2.2]{W-2018_nonlinear}, 
we can assume without loss of generality that 
$u_0, v_0 \in C^2(\overline{\Omega})$ with 
$\nabla u_0 \cdot \nu = \nabla v_0 \cdot \nu = 0$. 
Then we have 
  \[
    u_r, v_r \in C^0([0,R] \times [0,T_{\rm max}))\cap C^{2,1}((0,R) \times (0,T_{\rm max})).
  \]
As in \cite[p.\ 16]{MTY_2022}, from the first and third equations in \eqref{P1} 
it follows that
  \begin{align*}
    u_{rt} &= \left[r^{1-n}(r^{n-1} [(u+\delta)^{m_1}]_r)_r 
                 - \chi_1 r^{1-n}(r^{n-1} u w_r)_r
                 + \mu_1 u (1 - u - a_1 v)\right]_r
    \notag \\
           &= [(u+\delta)^{m_1}]_{rrr} + \frac{n-1}{r} [(u+\delta)^{m_1}]_{rr} - \frac{n-1}{r^2} [(u+\delta)^{m_1}]_{r}
    \notag \\
    &\quad\,
                - \chi_1 u_{rr} w_r - \chi_1 u_{r} w_{rr}
                + 2\chi_1 u u_r 
                + \chi_1 u_r v + \chi_1 u v_r 
                - \chi_1 \overline{M}(t) u_r
    \notag \\
    &\quad\,
                + \mu_1 u_r
                - 2 \mu_1 u u_r
                - a_1 \mu_1 u_r v
                - a_1 \mu_1 u v_r
  \end{align*}
for all $r\in(0,R)$ and $t\in(0,T_{\rm max})$. 
We rewrite this identity as 
  \begin{align*}
    u_t &= [(u+\delta)^{m_1}]_{rrr} + \frac{n-1}{r} [(u+\delta)^{m_1}]_{rr} 
             - \frac{n-1}{r^2} [(u+\delta)^{m_1}]_{r}\\
         &\quad\,
             + A_1(r,t)u_{rr} + B_1(r,t)u_r + D_1(r,t)v_r, 
  \end{align*}
where 
  \begin{align*}
    A_1(r,t)&:= - \chi_1 w_r, \\
    B_1(r,t)&:= - \chi_1 w_{rr} + 2\chi_1 u + \chi_1 v - \chi_1 \overline{M}(t) 
                 + \mu_1 - 2 \mu_1 u - a_1 \mu_1 v,\\
    D_1(r,t)&:= (\chi_1 - a_1 \mu_1) u
  \end{align*}
for $r\in(0,R)$ and $t\in(0,T_{\rm max})$.
Then we see that
  \begin{align}\label{ineq1}
    &\frac{1}{2}\frac{d}{dt}\int^R_0r(u_r)_+^2 \notag\\
    &\quad\,
    = \int^R_0 r(u_r)_+ [(u+\delta)^{m_1}]_{rrr}
       + (n-1) \int^R_0 (u_r)_+ [(u+\delta)^{m_1}]_{rr} \notag\\
    &\quad\,\quad\, 
       - (n-1) \int^R_0 \frac{1}{r}(u_r)_+ [(u+\delta)^{m_1}]_{r} \notag\\
    &\quad\,\quad\,
       + \int^R_0 r(u_r)_+ A_1(r,t)u_{rr}
       + \int^R_0 r(u_r)_+ B_1(r,t)u_r
       + \int^R_0 r(u_r)_+ D_1(r,t)v_r \notag\\
    &\quad\,
    =:J_1+J_2+J_3+J_4+J_5+J_6.
  \end{align}
Let $T\in(0,T_{\rm max})$. As in \cite[pp.\ 16--18]{MTY_2022}, under the condition \eqref{mu1mu2} 
the last three terms on the right-hand side are estimated as
  \begin{align}\label{ineq2}
    J_4+J_5+J_6
    \le c_1(T)\int^R_0 r[(u_r)_+^2 + (v_r)_+^2]
  \end{align}
for all $t\in(0,T)$ with some $c_1(T)>0$, so that we consider estimates for 
$J_1, J_2$ and $J_3$. 
First we can make sure that 
  \begin{align}\label{ineq3}
    J_3= - (n-1) m_1 \int^R_0 \frac{1}{r} (u+\delta)^{m_1-1} (u_r)_+^2\le0.
  \end{align}
Next, computing
  \begin{align*}
    [(u+\delta)^{m_1}]_{rr}
    =m_1(u+\delta)^{m_1-1}u_{rr} 
        + m_1(m_1-1)(u+\delta)^{m_1-2}u_r^2
  \end{align*}
and
  \begin{align*}
    [(u+\delta)^{m_1}]_{rrr}
    &=m_1(u+\delta)^{m_1-1}u_{rrr}
        + 3 m_1(m_1-1)(u+\delta)^{m_1-2}u_r u_{rr}\\
    &\quad\,
        + m_1(m_1-1)(m_1-2)(u+\delta)^{m_1-3}u_r^3, 
  \end{align*}
we can observe that 
  \begin{align}\label{ineq4}
    J_1+J_2
    &= m_1 \int^R_0 r(u_r)_+ (u+\delta)^{m_1-1}u_{rrr}
         + 3 m_1(m_1-1) \int^R_0 r (u+\delta)^{m_1-2}u_{rr} (u_r)_+^2 \notag\\
    &\quad\,
         + m_1(m_1-1)(m_1-2) \int^R_0 r (u+\delta)^{m_1-3}(u_r)^2(u_r)_+^2 \notag\\
    &\quad\,
         + (n-1)m_1 \int^R_0 (u_r)_+ (u+\delta)^{m_1-1}u_{rr} \notag\\
    &\quad\,
         + (n-1)m_1(m_1-1) \int^R_0 (u+\delta)^{m_1-2}u_r(u_r)_+^2.  
  \end{align}
Noting from regularity and radial symmetry that 
$u_r(0,t)=0$, we infer from the mean value theorem that 
$u_r(r,t)\le \|u_{rr}\|_{L^\infty((0,R)\times(0,T))}\cdot r$ 
for all $r\in(0,R)$ and $t\in(0,T)$. 
By using this inequality for the last term 
on the right-hand side of \eqref{ineq4}, 
we have
  \begin{align*}
    J_1+J_2 
    &\le m_1 \int^R_0 r(u_r)_+ (u+\delta)^{m_1-1}u_{rrr}
         + (n-1)m_1 \int^R_0 (u_r)_+ (u+\delta)^{m_1-1}u_{rr} \notag\\
    &\quad\,
         + c_2(T)\int^R_0 r (u_r)_+^2 \notag\\
    &=: J_7+J_8+ c_2(T)\int^R_0 r (u_r)_+^2
  \end{align*}
for all $t\in(0,T)$, where
  \begin{align*}
    c_2(T)
    &:= (n+2) m_1(m_1-1) \| u+\delta \|_{L^\infty((0,R)\times(0,T))}^{m_1-2}
        \| u_{rr} \|_{L^\infty((0,R)\times(0,T))} \\
    &\quad\,
       + m_1(m_1-1)(m_1-2)_+ \| u+\delta \|_{L^\infty((0,R)\times(0,T))}^{m_1-3}
          \| u_{r} \|_{L^\infty((0,R)\times(0,T))}^2.
  \end{align*}
Noticing from regularity and radial symmetry as well as 
the Neumann boundary condition in \eqref{P1} that 
$(u_r)_+(0,t) = (u_r)_+(R,t) = 0$, 
we see from integration by parts that 
  \begin{align}\label{ineq5}
    J_7+J_8
    &= - m_1 \int^R_0 (u_r)_+ (u+\delta)^{m_1-1}u_{rr} 
         - m_1 \int^R_0 r [(u_r)_+]_r (u+\delta)^{m_1-1}u_{rr} \notag\\
    &\quad\,
         - m_1(m_1-1) \int^R_0 r(u_r)_+ (u+\delta)^{m_1-2}u_r u_{rr} \notag\\
    &\quad\,
         + (n-1)m_1 \int^R_0 (u_r)_+ (u+\delta)^{m_1-1}u_{rr} \notag\\
    &= (n-2)m_1 \int^R_0 (u_r)_+ (u+\delta)^{m_1-1}u_{rr}
         - m_1 \int^R_0 r (u+\delta)^{m_1-1}[(u_r)_+]_r^2 \notag\\
    &\quad\,
         - m_1(m_1-1) \int^R_0 r (u+\delta)^{m_1-2}u_{rr}(u_r)_+^2 \notag\\
    &\le \frac{(n-2)}{2}m_1 \int^R_0 (u+\delta)^{m_1-1} [(u_r)_+^2]_r 
         - m_1(m_1-1) \int^R_0 r (u+\delta)^{m_1-2}u_{rr}(u_r)_+^2 \notag\\
    &=  -\frac{(n-2)}{2}m_1(m_1-1) \int^R_0 (u+\delta)^{m_1-2} (u_r)_+^3 \notag\\
    &\quad\,
          - m_1(m_1-1) \int^R_0 r (u+\delta)^{m_1-2}u_{rr}(u_r)_+^2 \notag\\
    &\le c_3(T)\int^R_0 r (u_r)_+^2
  \end{align}
for all $t\in(0,T)$, where 
$c_3(T):=m_1(m_1-1)\| u+\delta \|_{L^\infty((0,R)\times(0,T))}^{m_1-2}
\| u_{rr} \|_{L^\infty((0,R)\times(0,T))}$, so that 
a combination of \eqref{ineq1}--\eqref{ineq5} yields 
$\frac{d}{dt}\int^R_0 r(u_r)_+^2 \le c_4(T)\int^R_0r[(u_r)_+^2 + (v_r)_+^2]$ for all $t\in(0,T)$. 
Similarly, from the second and third equations in \eqref{P1} it follows that 
$\frac{d}{dt}\int^R_0 r(v_r)_+^2 \le c_5(T)\int^R_0r[(u_r)_+^2 + (v_r)_+^2]$ for all $t\in(0,T)$ 
with some $c_5(T)>0$. Thus we can obtain $c_6(T)>0$ such that
$\frac{d}{dt}\int^R_0r[(u_r)_+^2 + (v_r)_+^2] \le c_6(T)\int^R_0r[(u_r)_+^2 + (v_r)_+^2]$
for all $t\in(0,T)$, which together with the fact that 
$u_{0r}\le0$ and $v_{0r}\le0$ for all $r\in[0,R]$ ensures that 
$u_{r}\le0$ and $v_{r}\le0$ for all $r\in(0,R)$ and $t\in(0,T)$. 
Letting $T\nearrow T_{\rm max}$, we conclude the proof.
\end{proof}

Next we estimate the terms on the right hand side of \eqref{ineqphiU}.

\begin{lem}\label{lem2.5}
Let $s_0 \in (0,R^n)$. 
If $m_1$ satisfies \eqref{condim} and 
$\chi_1$ and $\chi_2$ fulfill \eqref{mu1mu2}, then 
there exist $b \in \big(1,2-\frac{4}{n(2-m_1)}\big)$ and $C>0$ such that
  \begin{align}\label{ineqphiU2}
    \phi_U'(t) &\ge - C s_0^{(3-b)\frac{2-m_1}{2}-\frac{2}{n}}\psi_U^{\frac{m_1}{2}}(t)
                          - Cs_0^{3-b-\frac{2}{n}}
                          + \chi_1 n \psi_U(t) 
  \notag\\
  &\quad\,
                          + \chi_1 n\int^{s_0}_0 s^{-b}(s_0-s) U_s(s,t) V(s,t)\,ds
                          - C s_0^{\frac{3-b}{2}} \sqrt{\psi_U(t)} 
                          - \frac{\mu_1 n}{b-1} \psi_U(t) 
  \notag \\ 
  &\quad\,
                          - \frac{a_1 \mu_1 n}{b-1}\int^{s_0}_0 s^{-b}(s_0-s) U_s(s,t) V(s,t)\,ds
  \end{align}
for all $t \in (0,T_{\rm max})$. 
\end{lem}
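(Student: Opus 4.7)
The plan is to start from \eqref{ineqphiU} in Lemma \ref{lem2.3} and bound the six terms $I_1,\ldots,I_6$ separately. A preliminary observation I will need for $I_4$ is that $\overline{M}(t)$ is uniformly bounded on $[0,T_{\rm max})$: integrating the first equation of \eqref{P1} over $\Omega$ and applying Cauchy--Schwarz to $\int_\Omega u^2$ yields $\tfrac{d}{dt}\int_\Omega u \le \mu_1\int_\Omega u - \tfrac{\mu_1}{|\Omega|}\bigl(\int_\Omega u\bigr)^2$, so $\int_\Omega u(\cdot,t) \le \max\{M_0,|\Omega|\}$, and analogously for $\int_\Omega v$.

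For the diffusion term $I_1$, I would rewrite $(nU_s+\delta)^{m_1-1}U_{ss} = \frac{1}{nm_1}\partial_s(nU_s+\delta)^{m_1}$ and integrate by parts in $s$. Since $b<2-\tfrac{2}{n}$ (implied by $b<2-\tfrac{4}{n(2-m_1)}$), the weight $s^{2-2/n-b}(s_0-s)$ vanishes at both endpoints, so
\[
I_1 = -n\bigl(2-\tfrac{2}{n}-b\bigr)\int_0^{s_0} s^{1-2/n-b}(s_0-s)(nU_s+\delta)^{m_1}\,ds + n\int_0^{s_0} s^{2-2/n-b}(nU_s+\delta)^{m_1}\,ds.
\]
The second (nonnegative) term may be discarded for a lower bound. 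In the first, $(nU_s+\delta)^{m_1}\le 2^{m_1-1}(n^{m_1}U_s^{m_1}+1)$ (using $\delta\le 1$) splits the estimate into a pure-power piece of order $s_0^{3-b-2/n}$ and the nontrivial integral $\int s^{1-2/n-b}(s_0-s)U_s^{m_1}\,ds$. I would estimate the latter by H\"older with exponents $2/m_1$ and $2/(2-m_1)$, factoring the integrand as $[s^{-b}(s_0-s)UU_s]^{m_1/2}\cdot R(s)$; the first factor furnishes $\psi_U^{m_1/2}$, while the residual factor $R(s)$ contains a problematic $U^{-m_1/2}U_s^{m_1/2}$ that I tame using the pointwise bound $U_s\le U/s$ from Lemma \ref{increase}. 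The surviving $s$-integral is $\int_0^{s_0} s^{1-b-4/(n(2-m_1))}(s_0-s)\,ds$, whose convergence at $s=0$ is exactly the content of the hypothesis $b<2-\tfrac{4}{n(2-m_1)}$; raising the result to the power $(2-m_1)/2$ then produces the factor $s_0^{(3-b)(2-m_1)/2-2/n}$.

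Terms $I_2$ and $I_3$ already appear verbatim on the right-hand side of the target inequality. For $I_4$, I apply Cauchy--Schwarz in the form
\[
\int_0^{s_0} s^{1-b}(s_0-s)U_s\,ds \le \psi_U^{1/2}\biggl(\int_0^{s_0} s^{2-b}(s_0-s)\frac{U_s}{U}\,ds\biggr)^{1/2},
\]
and in the second factor use $U_s\le U/s$ to reduce the integrand to $s^{1-b}(s_0-s)$, whose integral is $O(s_0^{3-b})$. Combined with the uniform bound on $\overline{M}(t)$, this yields the $-Cs_0^{(3-b)/2}\sqrt{\psi_U}$ contribution.

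For $I_5$ and $I_6$, I would apply $U_s\le U/s$ and $V_s\le V/s$ to one of the two $s$-derivative factors in each inner integrand, swap the order of integration by Fubini, and bound $(s_0-s)\le(s_0-\sigma)$ for $s\ge\sigma$ together with $\int_\sigma^{s_0} s^{-b}\,ds \le \sigma^{1-b}/(b-1)$; this reproduces exactly the weight $\sigma^{-b}(s_0-\sigma)/(b-1)$ from the definition of $\psi_U$, giving $I_5\ge-\tfrac{\mu_1 n}{b-1}\psi_U$ and $I_6\ge-\tfrac{a_1\mu_1 n}{b-1}\int_0^{s_0} s^{-b}(s_0-s)U_sV\,ds$ directly. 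The main obstacle is the H\"older estimate for $I_1$: the exponent split has to produce the $\psi_U^{m_1/2}$ factor with the correct power of $s_0$ while simultaneously leaving an $s$-integrand that is integrable at $0$, and this delicate balance is precisely what forces the hypothesis $m_1<2-4/n$ together with the restriction $b\in\bigl(1,2-\tfrac{4}{n(2-m_1)}\bigr)$.
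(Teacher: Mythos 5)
Your proposal is correct and follows essentially the same route as the paper, which simply outsources the computations to references: the $I_1$ estimate via integration by parts, the splitting of $(nU_s+\delta)^{m_1}$, and the H\"older step with exponents $\tfrac{2}{m_1}$, $\tfrac{2}{2-m_1}$ combined with $U_s\le U/s$ is the argument of [T\_2022\_JMAA, pp.~20--21], while your Cauchy--Schwarz treatment of $I_4$ and the Fubini-plus-$U_s\le U/s$ treatment of $I_5$, $I_6$ reproduce the estimates cited from [MTY\_2022]. All exponent bookkeeping checks out, including the identity $\bigl(3-b-\tfrac{4}{n(2-m_1)}\bigr)\tfrac{2-m_1}{2}=(3-b)\tfrac{2-m_1}{2}-\tfrac{2}{n}$ and the role of $b<2-\tfrac{4}{n(2-m_1)}$ in making the residual $s$-integral converge.
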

\begin{proof}
Since we can pick $b \in \big(1,2-\frac{4}{n(2-m_1)}\big)$ from \eqref{condim}, 
in the same way 
as in \cite[pp.\ 20, 21]{T_2022_JMAA} we find $c_1>0$ such that 
$I_1 \ge -c_1 s_0^{(3-b)\frac{2-m_1}{2}-\frac{2}{n}}\psi_U^{\frac{m_1}{2}}(t) - c_1s_0^{3-b-\frac{2}{n}}$
for all $t\in(0,T_{\rm max})$. 
By integrating the first equation in \eqref{P1} and using H\"{o}lder's inequality,  
we see that 
  \[
    \frac{d}{dt}\int_\Omega u \le \mu_1\int_\Omega u - \mu_1|\Omega|^{-1}\left(\int_\Omega u\right)^2,
  \]
which leads to $\int_\Omega u \le \max\big\{\int_\Omega u_0, |\Omega|\big\}$.
Since a similar estimate for $v$ is obtained, we find $c_2>0$ such that 
$\ol{M}(t) \le c_2$ for all $t\in(0,T_{\rm max})$. 
Noting this inequality and referring to an estimate for $\widetilde{I}_4$ in 
\cite[p.\ 20]{MTY_2022}, we have $I_4\ge- c_3 s_0^{\frac{3-b}{2}} \sqrt{\psi_U(t)}$ 
for all $t\in(0,T_{\rm max})$ with some $c_3>0$. 
Thanks to \eqref{mean}, we can employ \cite[Lemmas 5.4, 5.5]{MTY_2022}
to infer that $I_5$ and $I_6$ are estimated by 
the fifth and sixth terms on the right-hand side of \eqref{ineqphiU2}, respectively.
\end{proof}

\begin{remark}\label{rem}
In the proof of Lemma \ref{lem2.5} we applied an argument 
in \cite[pp.\ 20, 21]{T_2022_JMAA}. 
Thus $C>0$ provided by Lemma \ref{lem2.5} is independent of $\delta$.
\end{remark}

We establish  a super-quadratic nonlinear differential inequality for $\phi_U$.

\begin{lem}\label{lem2.6}
Let $s_0 \in (0,R^n)$. 
If \eqref{condim} and \eqref{condi} are satisfied, then 
there exist $b\in(1,2)$, $C_1>0$ and $C_2>0$ such that
  \begin{align}\label{ineqphiU3}
    \phi_U'(t) \ge C_1s_0^{-(3-b)} \phi_U^2(t) -C_2 s_0^{3-b-\frac{4}{n(2-m_1)}}
  \end{align}
for all $t\in(0,T_{\rm max})$. 
\end{lem}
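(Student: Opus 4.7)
The plan is to combine Lemma \ref{lem2.5} with a Cauchy--Schwarz lower bound for $\psi_U$ in terms of $\phi_U^2$, after making a choice of $b$ that makes the linear-in-$\psi_U$ coefficient strictly positive. First I would pick $b$ in the interval $\bigl(1+\max\{1,a_1\}\mu_1/\chi_1,\ 2-\tfrac{4}{n(2-m_1)}\bigr)$, which is non-empty precisely because of the first assumption in \eqref{condi}. For such $b$ one has $\chi_1 n - \tfrac{\mu_1 n}{b-1} > 0$ and $\chi_1 n - \tfrac{a_1\mu_1 n}{b-1}\ge 0$; since $U_s\ge 0$ and $V\ge 0$ by Lemma \ref{increase}, the two $\int_0^{s_0}s^{-b}(s_0-s)U_s V\,ds$ terms on the right-hand side of \eqref{ineqphiU2} combine into a nonneg quantity that can be dropped, leaving a positive multiple $\kappa_1\psi_U(t)$ with $\kappa_1 := n\bigl(\chi_1 - \tfrac{\mu_1}{b-1}\bigr)>0$.

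Next I would establish the key bound $\psi_U(t)\ge\tfrac{b(2-b)(3-b)}{2\,s_0^{3-b}}\phi_U^2(t)$. Writing $UU_s = \tfrac12(U^2)_s$ and integrating by parts (the boundary term at $s=0$ vanishes since $U(s,t)\le\|U_s\|_\infty s$ and $b<2$, that at $s=s_0$ vanishes since $s_0-s=0$) gives $\psi_U = \tfrac{b}{2}\int_0^{s_0}s^{-b-1}(s_0-s)U^2\,ds + \tfrac12\int_0^{s_0}s^{-b}U^2\,ds$. Factoring $s^{-b}(s_0-s)U = \bigl[s^{-(b+1)/2}(s_0-s)^{1/2}U\bigr]\cdot\bigl[s^{-(b-1)/2}(s_0-s)^{1/2}\bigr]$ and applying Cauchy--Schwarz together with the elementary computation $\int_0^{s_0}s^{1-b}(s_0-s)\,ds = s_0^{3-b}/[(2-b)(3-b)]$ yields $\phi_U^2\le \tfrac{s_0^{3-b}}{(2-b)(3-b)}\int_0^{s_0}s^{-b-1}(s_0-s)U^2\,ds$, whence the claimed lower bound for $\psi_U$.

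Finally, I would absorb the sub-linear error contributions in \eqref{ineqphiU2} into a small fraction of $\kappa_1\psi_U$ by Young's inequality. With conjugate exponents $2/m_1$ and $2/(2-m_1)$, the first error term satisfies $s_0^{(3-b)(2-m_1)/2-2/n}\psi_U^{m_1/2}\le \eta\,\psi_U + C_\eta\,s_0^{3-b-\tfrac{4}{n(2-m_1)}}$, so the exponent $3-b-\tfrac{4}{n(2-m_1)}$ appears exactly as stated. With exponents $2,2$, the term $s_0^{(3-b)/2}\psi_U^{1/2}\le \eta\,\psi_U + C_\eta\,s_0^{3-b}$. Choosing $\eta$ so small that $\kappa_1-2\eta\ge\kappa_1/2$ and inserting the Cauchy--Schwarz bound $\psi_U\ge\tfrac{b(2-b)(3-b)}{2s_0^{3-b}}\phi_U^2$ into the retained half, I obtain $\phi_U'(t)\ge C_1 s_0^{-(3-b)}\phi_U^2(t) - C\bigl(s_0^{3-b-\tfrac{4}{n(2-m_1)}} + s_0^{3-b-\tfrac{2}{n}} + s_0^{3-b}\bigr)$. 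Since $s_0\le R^n$ is bounded and $\tfrac{4}{n(2-m_1)}>\tfrac{2}{n}>0$, the last two error powers are dominated by the first, collapsing the error into a single constant multiple of $s_0^{3-b-\tfrac{4}{n(2-m_1)}}$ and producing \eqref{ineqphiU3}.

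The main obstacle is the exponent bookkeeping: verifying that the interval for $b$ is non-empty is equivalent to \eqref{condi}, that the Young exponent on the diffusion-type error term produces exactly $3-b-\tfrac{4}{n(2-m_1)}$, and that this is indeed the dominant error power in $s_0$. Everything else, including the sign structure of the logistic corrections and the use of Lemma \ref{increase}, feeds directly into the above scheme.
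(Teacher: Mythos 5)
Your argument is correct and follows essentially the same route as the paper: choose $b\in\bigl(1+\max\{1,a_1\}\mu_1/\chi_1,\,2-\tfrac{4}{n(2-m_1)}\bigr)$, drop the nonnegative $U_sV$ terms, absorb the sublinear errors into $\psi_U$ by Young's inequality, and dominate the resulting powers of $s_0$ by $s_0^{3-b-\frac{4}{n(2-m_1)}}$. The only difference is that you prove the lower bound $\psi_U\ge \tfrac{b(2-b)(3-b)}{2}s_0^{-(3-b)}\phi_U^2$ directly via integration by parts and Cauchy--Schwarz, whereas the paper simply cites \cite[Lemma 4.3]{W-2018} for this step; your computation is a correct reproduction of that lemma.
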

\begin{proof}
By means of \eqref{condi} we can take 
$b\in\big(1+\frac{\max\{1,a_1\}\mu_1}{\chi_1},2-\frac{4}{n(2-m_1)}\big)$. 
Then the relation $b>1+\frac{\max\{1,a_1\}\mu_1}{\chi_1}$ guarantees that 
$\chi_1 - \frac{\max\{1,a_1\}\mu_1}{b-1}>0$.
By virtue of this and Young's inequality, it follows from \eqref{ineqphiU2} that for any $s_0\in(0,R^n)$,
  \begin{align*}
    \phi_U'(t) 
    &\ge - c_1 s_0^{(3-b)\frac{2-m_1}{2}-\frac{2}{n}}\psi_U^{\frac{m_1}{2}}(t) 
            - c_1 s_0^{3-b-\frac{2}{n}}
            + \left(\chi_1 - \frac{\mu_1}{b-1}\right) n \psi_U(t) \notag\\
    &\quad\,
            + \left(\chi_1 - \frac{\mu_1a_1}{b-1}\right) n\int^{s_0}_0 s^{-b}(s_0-s) U_s(s,t) V(s,t)\,ds
            - c_1 s_0^{\frac{3-b}{2}} \sqrt{\psi_U(t)} \notag\\
    &\ge - c_1 s_0^{(3-b)\frac{2-m_1}{2}-\frac{2}{n}}\psi_U^{\frac{m_1}{2}}(t)
            - c_1 s_0^{3-b-\frac{2}{n}}
            + \left(\chi_1 - \frac{\mu_1}{b-1}\right) n \psi_U(t)
            - c_1 s_0^{\frac{3-b}{2}} \sqrt{\psi_U(t)} \notag\\
    &\ge \frac{1}{2}\left(\chi_1 - \frac{\mu_1}{b-1}\right) \psi_U(t) 
            - c_2s_0^{3-b-\frac{4}{n(2-m_1)}} 
            - c_1 s_0^{3-b-\frac{2}{n}} - c_2 s_0^{3-b} 
  \end{align*}
with some $c_1,c_2>0$. 
As to the last three terms on the right-hand side of this inequality, 
the relation $s_0<R^n$ implies that 
  \[ 
    c_2s_0^{3-b-\frac{4}{n(2-m_1)}} + c_1 s_0^{3-b-\frac{2}{n}} + c_2 s_0^{3-b}
    < c_3s_0^{3-b-\frac{4}{n(2-m_1)}}, 
  \]
where $c_3:=\big(c_2+c_1(R^n)^{\frac{2}{n}\cdot\frac{m_1}{2-m_1}}+c_2(R^n)^\frac{4}{n(2-m_1)}\big)$,
and thus, 
  \[
    \phi_U'(t) \ge \frac{1}{2}\left(\chi_1 - \frac{\mu_1}{b-1}\right) 
                        \psi_U(t) -c_3 s_0^{3-b-\frac{4}{n(2-m_1)}}
  \]
for all $t\in(0,T_{\rm max})$. 
Finally, we rely on \cite[Lemma 4.3]{W-2018} to obtain \eqref{ineqphiU3}.
\end{proof} 

We next derive the key inequality for $\phi_U$, which will ensures that $T_{\rm max}<\infty$.

\begin{lem}\label{lem2.7} 
If \eqref{condim} and \eqref{condi} are satisfied,
then for all $M_0>0$ and $\widetilde{M}_0 \in (0,M_0)$ 
one can find $r_0=\big(\frac{s_0}{4}\big)^\frac{1}{n} \in (0,R)$ with the following property\/{\rm :}
If $u_0$ and $v_0$ fulfill \eqref{initial1} and \eqref{initial2}, 
then there exists $C>0$ such that
  \begin{align}\label{ineqphiU4}
    \phi_U'(t) \ge Cs_0^{-(3-b)} \phi_U^2(t)
  \end{align}
for all $t\in(0,T_{\rm max})$. 
\end{lem}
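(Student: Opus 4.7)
The plan is to upgrade the inequality \eqref{ineqphiU3} of Lemma \ref{lem2.6} to the purely quadratic form \eqref{ineqphiU4} by absorbing the negative constant term. This requires choosing $s_0$ (and hence $r_0 = (s_0/4)^{1/n}$) so small that the initial mass concentration forced by \eqref{initial2} keeps $\phi_U$ above a threshold where the quadratic term dominates.

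First I would apply Lemma \ref{lem2.6} to fix $b \in (1,2)$ and constants $C_1, C_2 > 0$, independent of $s_0$, for which \eqref{ineqphiU3} holds. Next, using the radial monotonicity of $u_0$ together with the identity $U(r^n, 0) = \frac{1}{n\omega_n}\int_{B_r(0)} u_0 \, dx$ (where $\omega_n = |B_1(0)|$), the second condition in \eqref{initial2} yields $U(s, 0) \ge \widetilde{M}_0/(n\omega_n)$ for all $s \ge s_0/4$. Restricting the integral defining $\phi_U(0)$ to $[s_0/4, s_0/2]$, on which $s_0 - s \ge s_0/2$, and evaluating $\int_{s_0/4}^{s_0/2} s^{-b}\,ds$ explicitly, I would obtain
\[
  \phi_U(0) \ge c_0 \widetilde{M}_0 \, s_0^{2-b}
\]
with an explicit $c_0 = c_0(n,b) > 0$.

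The decisive step is the choice of $s_0$. The condition ensuring that the quadratic part of \eqref{ineqphiU3} dominates the constant term whenever $\phi_U(t) \ge \phi_U(0)$ amounts to
\[
  c_0^2 \widetilde{M}_0^2 \, s_0^{4-2b} \ge \frac{2 C_2}{C_1} \, s_0^{6-2b-\frac{4}{n(2-m_1)}},
\]
or equivalently $s_0^{2-\frac{4}{n(2-m_1)}} \le \frac{C_1 c_0^2 \widetilde{M}_0^2}{2C_2}$. The exponent $2 - \frac{4}{n(2-m_1)}$ is strictly positive thanks to \eqref{condim} (which forces $n(2-m_1) > 4$), so this can be achieved by taking $s_0 \in (0, R^n)$ small enough; I would then set $r_0 := (s_0/4)^{1/n} \in (0, R)$ accordingly.

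Finally I would close the argument with a continuity bootstrap. Let $T^\circ$ be the supremum of those $t \in [0, T_{\max})$ such that $\phi_U \ge \phi_U(0)$ on $[0, t]$. On $[0, T^\circ]$ the above choice of $s_0$ gives $C_2 s_0^{3-b-\frac{4}{n(2-m_1)}} \le \tfrac{C_1}{2} s_0^{-(3-b)} \phi_U^2$, so \eqref{ineqphiU3} reduces to $\phi_U'(\tau) \ge \tfrac{C_1}{2} s_0^{-(3-b)} \phi_U^2(\tau) > 0$ there. Strict monotonicity then forbids $T^\circ < T_{\max}$, and the desired inequality \eqref{ineqphiU4} follows with $C := C_1/2$ throughout $(0, T_{\max})$. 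The main conceptual point is the positivity of the exponent $2 - \frac{4}{n(2-m_1)}$, which is exactly what \eqref{condim} furnishes; the rest is a standard bootstrap above a manufactured threshold, with no new analytic difficulty beyond what has already been established in Lemmas \ref{lem2.5} and \ref{lem2.6}.
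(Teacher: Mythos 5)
Your proposal is correct and follows essentially the same route as the paper: invoke Lemma \ref{lem2.6}, choose $s_0$ small enough (using the positivity of the exponent $2-\frac{4}{n(2-m_1)}$ guaranteed by \eqref{condim}) so that the lower bound $\phi_U(0)\gtrsim \widetilde{M}_0 s_0^{2-b}$ coming from \eqref{initial2} makes the quadratic term absorb the constant term, and then run a continuity/bootstrap argument on the sublevel set. The only cosmetic differences are that you derive the lower bound on $\phi_U(0)$ directly rather than citing \cite[(5.5)]{W-2018}, and you bootstrap above the threshold $\phi_U(0)$ itself rather than the slightly smaller explicit threshold $\frac{2^{b-3}\widetilde{M}_0}{\omega_n}s_0^{2-b}$ used in the paper.
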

\begin{proof}
By virtue of Lemma \ref{lem2.6} we can take 
$b\in(1,2)$, $c_1>0$ and $c_2>0$ such that for any $s_0\in(0,R^n)$, 
$\phi_U'(t) \ge c_1s_0^{-(3-b)} \phi_U^2(t) -c_2 s_0^{3-b-\frac{4}{n(2-m_1)}}$ 
for all $t\in(0,T_{\rm max})$. 
Now let us fix $s_0\in(0,R^n)$ fulfilling that 
  \begin{align}\label{s0}
    s_0 \le \left(\frac{c_1}{2} \cdot \left(\frac{2^{2-b}\widetilde{M}_0}{\omega_n}\right)^2 \cdot \frac{1}{c_2}\right)^{\frac{1}{2-\frac{4}{n(2-m_1)}}}, 
  \end{align}
where $\omega_n$ denotes the $(n-1)$-dimensional surface area 
of the unit sphere in $\mathbb{R}^n$. 
We introduce the set $S$ as 
  \[
    S:=\left\{T\in(0,T_{\rm max}) \;\middle|\; \phi_U(t)\ge\frac{2^{b-3}\widetilde{M}_0}{\omega_n}s_0^{2-b}\quad\mbox{for all}\ t\in[0,T]\right\}.
  \]
Then $S$ is not empty. Indeed, 
putting $r_0:=\big(\frac{s_0}{4}\big)^\frac{1}{n}$, 
we see from \cite[(5.5)]{W-2018} that 
  \[
    \phi_U(0)>\frac{2^{b-3}\widetilde{M}_0}{\omega_n}s_0^{2-b}, 
  \]
which together with continuity of $\phi_U$ guarantees that $S \neq \emptyset$. 
Therefore we can define $T_1:=\sup S\in(0,T_{\rm max}]$. 
The definition of $S$ and \eqref{s0} entail that 
  \begin{align*}
    \frac{c_1}{2}s_0^{-(3-b)} \phi_U^2(t) -c_2 s_0^{3-b-\frac{4}{n(2-m_1)}}
    &\ge \frac{c_1}{2} \left(\frac{2^{b-3}\widetilde{M}_0}{\omega_n} \right)^2 s_0^{1-b}
         - c_2 s_0^{3-b-\frac{4}{n(2-m_1)}} \\
    &= \left[\frac{c_1}{2} \left(\frac{2^{b-3}\widetilde{M}_0}{\omega_n} \right)^2 
         - c_2 s_0^{2-\frac{4}{n(2-m_1)}}\right] s_0^{1-b}
    \ge 0, 
  \end{align*}
and thereby it follows that $\phi_U'(t) \ge \frac{c_1}{2}s_0^{-(3-b)} \phi_U^2(t)\ge0$
for all $t\in(0,T_1)$. This leads to $T_1=T_{\rm max}$, 
which concludes the proof.
\end{proof}

Finally we prove Theorem \ref{thm1}.

\begin{proof}[Proof of Theorem \ref{thm1}]
By means of Lemma \ref{lem2.7}, the inequality \eqref{ineqphiU4} holds, 
which yields $T_{\rm max}<\infty$. 
Therefore the claim is a direct consequence of Lemma \ref{LS}.
\end{proof}

%
%
\section{The case of degenerate diffusion}\label{dege}

In this section we consider the system \eqref{P0} with $\delta=0$ as follows:
\begin{align}\label{P2}
  \begin{cases}
    u_t = \Delta u^{m_1} - \chi_1 \nabla\cdot(u\nabla w)
          + \mu_1 u (1-u-a_1v),
    &x\in\Omega,\ t>0,\\
    v_t = \Delta v^{m_2} - \chi_2 \nabla\cdot(v\nabla w)
            + \mu_2 v (1-a_2u-v),
    &x\in\Omega,\ t>0,\\
    0 = \Delta w +u+v-\ol{M}(t), 
    &x\in\Omega,\ t>0,\\
    \nabla u^{m_1} \cdot\nu = \nabla v^{m_2} \cdot\nu = \nabla w \cdot\nu = 0, 
    &x\in\pa\Omega,\ t>0,\\
    u(x,0)=u_0(x), \ \quad v(x,0)=v_0(x), 
    &x\in\Omega, 
\end{cases}
\end{align}
with $\int_\Omega w(x,t)\,dx=0$, $t>0$,
where $\Omega = B_R(0) \subset \mathbb{R}^n$ $(n\ge5)$;
$m_1,m_2>1$ and $\chi_1,\chi_2,\mu_1,\mu_2,a_1,a_2>0$; 
$u_0,v_0\in L^\infty(\ol{\Omega})$;
$\ol{M}(t)=\frac{1}{|\Omega|}\int_\Omega (u(x,t)+v(x,t))\,dx$
for $t>0$.

In the case of nondegenerate diffusion, 
the differential inequality \eqref{ineqphiU4} is the key to the proof.
However, we cannot derive it in the case of degenerate diffusion 
due to the lack of regularity of solutions. 
To overcome this difficulty, we aim to construct a weak solution satisfying 
an integral inequality, which is defined in \eqref{mineq}.
Before stating a main result, we introduce definitions of 
{\it moment solutions}, 
{\it maximal moment solutions} 
and {\it blow-up} for moment solutions.

\begin{df}[{moment solutions}]\label{ms}
Let $T \in (0,\infty]$. 
A triplet $(u,v,w)$ of nonnegative and radially symmetric functions defined on $\Omega\times(0,T)$
is called a {\it moment solution} of \eqref{P2} 
on $[0,T)$ if
\begin{enumerate}[(i)]
\item $u,v \in L^\infty_{\rm loc}([0,T);L^\infty(\Omega))\cap C^0_{{\rm w}-\star}([0,T);L^\infty(\Omega))$
and,

$u^{m_1}, v^{m_2} \in L^2(0,T;H^1(\Omega))$ \ if $T<\infty$; \
$u^{m_1}, v^{m_2} \in L^2_{\rm loc}([0,T);H^1(\Omega))$ \ if $T=\infty$,
\item $w \in L^\infty_{\rm loc}([0,T);H^{1}(\Omega))$,
\item for all $\varphi \in L^2(0,T;H^1(\Omega)) \cap W^{1,1}(0,T;L^2(\Omega))$
with ${\rm supp}\, \varphi(x,\cdot)\subset[0,T)$ for a.a.\ $x\in\Omega$,
\begin{align*}
  &\int^{T}_{0} \int_\Omega 
    (\nabla u^{m_1} \cdot \nabla \varphi - \chi_1 u \nabla w \cdot \nabla \varphi
     - \mu_1 u (1-u-a_1v) \varphi 
     - u \varphi_t)
  = \int_\Omega u_0 \varphi(\cdot,0),
  \\
  &\int^{T}_{0} \int_\Omega 
    (\nabla v^{m_2} \cdot \nabla \varphi - \chi_2 v \nabla w \cdot \nabla \varphi
     - \mu_2 v (1-a_2u-v) \varphi 
     - v \varphi_t)
  = \int_\Omega v_0 \varphi(\cdot,0),
  \\
  &\int^{T}_{0} \int_\Omega \nabla w \cdot \nabla \varphi
  - \int^{T}_{0} \int_\Omega (u+v) \varphi 
  + \int^{T}_{0} \left(\ol{M}(t) \int_\Omega \varphi \right)
  =0,
\end{align*}
\item $(u,v)$ satisfies the following moment inequality: 
\begin{align}\label{mineq}
  \phi_U(t) - \phi_U(0) \ge K \int^t_0 \phi_U^2(\tau)\,d\tau
  \quad \mbox{for all}\ t\in(0,T) 
\end{align}
for some constant $K>0$ independent of $T$, where $\phi_U$ is the moment-type
functional defined in \eqref{phiU}.
\end{enumerate}
\end{df}

\begin{df}[{maximal moment solutions}]\label{mms}
Define the set $\mathcal{S}$ as
  \[
    \mathcal{S}:=\{ (T,u,v,w) \;|\; 
                          T\in(0,\infty],\ 
                          (u,v,w) \ \mbox{is a moment solution of \eqref{P2} on}\ [0,T)
                       \},
  \]
which is not empty by Proposition \ref{LMS}, 
with the order relation $\preceq$ given by
  \begin{align*}
    &(T_1,u_1,v_1,w_1) \preceq (T_2,u_2,v_2,w_2) \\ 
    &\quad\,:\Longleftrightarrow \ 
    T_1 \le T_2,\ u_2|_{(0,T_1)}=u_1,\ v_2|_{(0,T_1)}=v_1\ w_2|_{(0,T_1)}=w_1.
  \end{align*}
Then Zorn's lemma assures some maximal element 
$(\widetilde{T}_{\rm max},u,v,w)\in\mathcal{S}$, 
and $(u,v,w)$ is called a {\it maximal moment solution} of \eqref{P2} on $[0,\widetilde{T}_{\rm max})$.
\end{df}

\begin{df}[{blow-up}]
Let $(u,v,w)$ be a maximal moment solution of \eqref{P2} 
on $[0,\widetilde{T}_{\rm max})$.
If $u$ and $v$ satisfy
  \[
    \underset{t \nearrow \widetilde{T}_{\rm max}}{\limsup}\, (\|u(t)\|_{L^\infty(\Omega)} + \|v(t)\|_{L^\infty(\Omega)})
    =\infty,
  \]
then we say that $(u,v,w)$ {\it blows up} at $\widetilde{T}_{\rm max}$.
\end{df}

The main theorem in this section reads as follows.

\begin{thm}\label{thm2}
Let $\Omega = B_R(0) \subset \mathbb{R}^n$ $(n\ge5)$, 
$m_1,m_2>1$ and $\chi_1, \chi_2, a_1, a_2, \mu_1,\mu_2>0$.
Assume that \eqref{condim} and \eqref{condi} hold.
Then, for all $M_0>0$ and $\widetilde{M}_0 \in (0,M_0)$ 
there exists $r_0 \in (0,R)$ with the following property\/{\rm :}
If $u_0$ and $v_0$ satisfy that 
  \begin{align}\label{initial3}
    u_0, v_0\in L^\infty(\Omega)\mbox{ are nonnegative, radially symmetric and radially nonincreasing}
  \end{align}
and that \eqref{initial2} holds,
then there exists $\widetilde{T}_{\rm max}<\infty$ and a moment solution $(u,v,w)$ of \eqref{P2} 
blows up at $\widetilde{T}_{\rm max}$.
\end{thm}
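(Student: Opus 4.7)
The plan is to construct a moment solution as a limit of classical solutions to the nondegenerate approximate problems with $\delta=\varepsilon\in(0,1)$, and then invoke the moment inequality to force blow-up in finite time. First I fix $u_0,v_0$ as in \eqref{initial3}--\eqref{initial2} and approximate them by smooth radially nonincreasing data $u_{0\varepsilon},v_{0\varepsilon}\in C^0(\ol{\Omega})$ with $\|u_{0\varepsilon}\|_{L^\infty}\le\|u_0\|_{L^\infty}+1$, $\int_\Omega u_{0\varepsilon}\le M_0$ and $\int_{B_{r_0}}u_{0\varepsilon}\ge\widetilde M_0$ uniformly in $\varepsilon$. Lemma \ref{LS} then yields classical solutions $(u_\varepsilon,v_\varepsilon,w_\varepsilon)$ on $[0,T_{\rm max,\varepsilon})$, and Lemmas \ref{lem2.6}--\ref{lem2.7} together with Remark \ref{rem} give a constant $K>0$ \emph{independent of $\varepsilon$} such that
\begin{equation*}
  \phi_{U_\varepsilon}'(t)\ge K\,\phi_{U_\varepsilon}^2(t),\qquad t\in(0,T_{\rm max,\varepsilon}).
\end{equation*}

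The central obstacle, as the author flags, is that a priori $T_{\rm max,\varepsilon}$ could shrink to $0$ as $\varepsilon\to0$, so the moment inequality would become vacuous in the limit. I would devote a separate lemma (corresponding to the Lemma \ref{lem3.3} promised in the introduction) to producing $T_0\in(0,\infty)$ and $K_0>0$ such that
\begin{equation*}
  T_0\le T_{\rm max,\varepsilon}\quad\text{and}\quad
  \|u_\varepsilon(\cdot,t)\|_{L^\infty}+\|v_\varepsilon(\cdot,t)\|_{L^\infty}\le K_0
  \quad\text{for all } t\in[0,T_0),\ \varepsilon\in(0,1).
\end{equation*}
The natural route is to set up a Moser-type iteration on $u_\varepsilon+v_\varepsilon$: testing the first two equations against $(u_\varepsilon+v_\varepsilon)^{p-1}$ for $p\ge2$, using the third equation to rewrite $-\nabla\cdot(u_\varepsilon\nabla w_\varepsilon)$ (this contributes a favorable $+\int(u_\varepsilon+v_\varepsilon)^{p+1}$ term up to the $\ol{M}_\varepsilon$-correction, but also a harmful one; the degenerate-diffusion good term $\int|\nabla(u_\varepsilon+\varepsilon)^{(m_1+p-1)/2}|^2$ is uniform in $\varepsilon$), and arguing that as long as $\|u_\varepsilon\|_{L^\infty}+\|v_\varepsilon\|_{L^\infty}$ stays below some threshold depending only on the initial data, a Gronwall-type inequality maintains the bound on a uniform time interval. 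This is where I expect the hardest bookkeeping, since $\chi_1,\chi_2$ are large by \eqref{condi}.

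Once $T_0$ is secured, I obtain uniform estimates from the standard energy identity and Lemma \ref{lem3.3}: $(u_\varepsilon+\varepsilon)^{m_1},(v_\varepsilon+\varepsilon)^{m_2}\in L^2(0,T_0;H^1(\Omega))$ uniformly, hence $u_\varepsilon^{m_1},v_\varepsilon^{m_2}$ likewise; the third equation gives $w_\varepsilon\in L^\infty(0,T_0;W^{2,q}(\Omega))$ uniformly for any $q<\infty$, so that $u_\varepsilon\nabla w_\varepsilon$ is bounded in $L^\infty(0,T_0;L^q(\Omega))$; and the weak formulations of the first two equations then give uniform bounds on $u_{\varepsilon t},v_{\varepsilon t}$ in some $L^2(0,T_0;(W^{1,q'}(\Omega))^\ast)$. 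An Aubin--Lions extraction along a sequence $\varepsilon_k\to0$ yields
\begin{equation*}
  u_{\varepsilon_k}\to u,\quad v_{\varepsilon_k}\to v\ \text{in } L^2(\Omega\times(0,T_0)),\quad
  u_{\varepsilon_k}\stackrel{\ast}{\rightharpoonup}u,\ v_{\varepsilon_k}\stackrel{\ast}{\rightharpoonup}v\ \text{in } L^\infty,
\end{equation*}
and $\nabla u_{\varepsilon_k}^{m_1}\rightharpoonup\nabla u^{m_1}$, $\nabla w_{\varepsilon_k}\to\nabla w$ strongly enough to pass to the limit in each bilinear term of the weak formulation. Radial symmetry, nonnegativity and the regularity conditions (i)--(iii) of Definition \ref{ms} are preserved in the limit, so $(u,v,w)$ is a weak solution on $[0,T_0)$.

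It remains to carry the moment inequality across the limit. Since $U_{\varepsilon_k}(s,t)\to U(s,t)$ pointwise (or at least in $L^1_{\mathrm{loc}}$) on $(0,R^n)\times(0,T_0)$ by the $L^\infty$ bound and dominated convergence, $\phi_{U_{\varepsilon_k}}(t)\to\phi_U(t)$ for a.e.\ $t$; integrating $\phi_{U_\varepsilon}'\ge K\phi_{U_\varepsilon}^2$ from $0$ to $t$ and applying Fatou on the right yields
\begin{equation*}
  \phi_U(t)-\phi_U(0)\ge K\int_0^t\phi_U^2(\tau)\,d\tau,\qquad t\in(0,T_0),
\end{equation*}
which is condition (iv) of Definition \ref{ms}. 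Hence $(T_0,u,v,w)\in\mathcal{S}$ is nonempty, and Zorn's lemma supplies a maximal moment solution on $[0,\widetilde T_{\rm max})$. The integral inequality above implies $\phi_U$ satisfies $\phi_U(t)\ge\phi_U(0)/(1-K\phi_U(0)t)$ whenever it remains finite, so $\phi_U$ must blow up at some finite time; this forces $\widetilde T_{\rm max}<\infty$ because the uniform $L^\infty$ bound from Lemma \ref{lem3.3}, applied on any interval where the solution extends as a moment solution, would otherwise allow a further extension contradicting maximality. Therefore $\limsup_{t\nearrow\widetilde T_{\rm max}}(\|u(t)\|_{L^\infty}+\|v(t)\|_{L^\infty})=\infty$, completing the proof.
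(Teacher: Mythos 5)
Your proposal follows essentially the same route as the paper: approximation by the nondegenerate problems, the uniform lower bound on $T_{\rm max,\ep}$ together with the $\ep$-independent $L^\infty$ bound (Lemma \ref{lem3.3}), passage of the moment inequality to the limit, Zorn's lemma for a maximal moment solution, finiteness of $\widetilde{T}_{\rm max}$ from the integral inequality, and an extension-versus-maximality argument for the blow-up. The only place you are looser than the paper is the final step: deducing $\limsup_{t\nearrow\widetilde{T}_{\rm max}}(\|u(t)\|_{L^\infty(\Omega)}+\|v(t)\|_{L^\infty(\Omega)})=\infty$ requires constructing a trace of $(u,v)$ at $\widetilde{T}_{\rm max}$ under the boundedness hypothesis (the paper obtains $u_t,v_t\in L^2(0,\widetilde{T}_{\rm max};(H^1(\Omega))^\star)$ via Lemma \ref{lem3.7}, hence uniform continuity into $(H^1(\Omega))^\star$) and verifying that the moment inequality persists on the extended interval, whereas your closing sentences conflate this extension argument with the separate, purely moment-inequality-based proof that $\widetilde{T}_{\rm max}<\infty$.
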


%
%
\subsection{Local existence of moment solutions}

The purpose of this section is to prove local existence of moment solutions.

\begin{prop}\label{LMS}
Let $\Omega = B_R(0) \subset \mathbb{R}^n$ $(n\ge5)$ 
and let $m_1,m_2>1$, $\chi_1, \chi_2, a_1, a_2, \mu_1,\mu_2>0$.
Assume that \eqref{condim} and \eqref{condi} hold.
Then, for all $M_0>0$ and $\widetilde{M}_0 \in (0,M_0)$ 
there exists $r_0 \in (0,R)$ with the following property\/{\rm :}
If $u_0$ and $v_0$ satisfy \eqref{initial3} and \eqref{initial2}, then there exist 
$\widetilde{T}_{\rm max}\in(0,\infty]$
and a moment solution of \eqref{P2} on $[0,\widetilde{T}_{\rm max})$.
\end{prop}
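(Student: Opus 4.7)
The plan is to construct a moment solution as a limit of classical solutions to the nondegenerate problem \eqref{P1} with $\delta = \ep \in (0,1)$, following the scheme outlined at the end of the introduction. First I would smooth the initial data: choose $\{u_{0\ep}\}, \{v_{0\ep}\} \subset C^0(\ol{\Omega})$ that are nonnegative, radially symmetric, radially nonincreasing, converge to $u_0, v_0$ in $L^p(\Omega)$ for every $p<\infty$ and a.e.\ on $\Omega$, and satisfy the mass conditions $\int_\Omega u_{0\ep}\,dx \le M_0$ and $\int_{B_{r_0}(0)} u_{0\ep}\,dx \ge \widetilde{M}_0$ uniformly in $\ep$. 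Lemma \ref{LS} then yields classical radial solutions $(u_\ep, v_\ep, w_\ep)$ of \eqref{P1} with $\delta=\ep$ on a maximal interval $[0, T_{\rm max,\ep})$.

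Next I would apply the machinery of Section \ref{nondege}, noting that by Remark \ref{rem} the constants appearing in Lemmas \ref{lem2.5}--\ref{lem2.7} can be chosen independently of $\delta=\ep$. Picking $r_0 = (s_0/4)^{1/n}$ and $s_0$ small exactly as in Lemma \ref{lem2.7}, and integrating $\phi_{U_\ep}'(t)\ge K\phi_{U_\ep}^2(t)$ in time, I obtain the $\ep$-uniform moment inequality
\[
\phi_{U_\ep}(t) - \phi_{U_\ep}(0) \ge K \int_0^t \phi_{U_\ep}^2(\tau)\,d\tau
\qquad \text{for all } t \in [0, T_{\rm max,\ep}).
\]

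The main obstacle is the one flagged in the strategy paragraph: a priori, $T_{\rm max,\ep}$ may collapse to $0$ as $\ep\to 0$, so a direct passage to the limit is impossible. The crucial step is therefore to establish the content of Lemma \ref{lem3.3}: there exist $T_0>0$ and $K_0>0$, both independent of $\ep$, with $T_0\le T_{\rm max,\ep}$ and $\|u_\ep(\cdot,t)\|_{L^\infty(\Omega)} + \|v_\ep(\cdot,t)\|_{L^\infty(\Omega)} \le K_0$ for $t\in[0,T_0)$. I would obtain this by an Alikakos--Moser iteration applied uniformly in $\ep$: testing the first two equations against $1$ and exploiting the logistic dissipation gives a uniform $L^1$ bound on $u_\ep+v_\ep$, the third equation then yields uniform $L^q$ bounds on $\nabla w_\ep$ through standard elliptic regularity, and testing against $u_\ep^{p-1}$ (and $v_\ep^{p-1}$) leads, after absorbing the chemotactic term via Young's inequality into the porous-medium dissipation $\|\nabla u_\ep^{(m_1+p-1)/2}\|_{L^2}^2$, to an inequality of the schematic form $\frac{d}{dt}\|u_\ep\|_{L^p}^p \le C_p(1 + \|u_\ep\|_{L^p}^{p+\alpha})$ with $C_p$ independent of $\ep$. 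A comparison ODE then provides a common horizon on which the $L^p$ norms, and hence (by letting $p\to\infty$) the $L^\infty$ norms, stay uniformly bounded.

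Once the uniform bounds on $[0,T_0)$ are available, standard compactness arguments deliver a subsequence $\ep_k\to 0$ with $u_{\ep_k}\to u$ and $v_{\ep_k}\to v$ weakly-$\star$ in $L^\infty$ and a.e., $\nabla(u_{\ep_k}+\ep_k)^{m_1} \rightharpoonup \nabla u^{m_1}$ and $\nabla(v_{\ep_k}+\ep_k)^{m_2}\rightharpoonup \nabla v^{m_2}$ weakly in $L^2(\Omega\times(0,T_0))$ (using energy estimates obtained by testing with $(u_\ep+\ep)^{m_1-1}$ and analogously for $v_\ep$, uniform in $\ep$), and $w_{\ep_k}\to w$ in $L^\infty(0,T_0;H^1(\Omega))$; the Aubin--Lions lemma upgrades the convergence of $u_{\ep_k}, v_{\ep_k}$ to strong convergence in $L^q(\Omega\times(0,T_0))$ for every $q<\infty$, which is enough to pass to the limit in all nonlinear terms of the weak formulation in Definition \ref{ms}. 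For the moment inequality, the a.e.\ convergence together with the uniform $L^\infty$ bound gives $\phi_{U_{\ep_k}}(t) \to \phi_U(t)$ pointwise in $t$ by dominated convergence, so Fatou's lemma applied to the right-hand side of the $\ep$-uniform moment inequality yields \eqref{mineq} with the same constant $K$ on $[0,T_0)$. Setting $\widetilde{T}_{\rm max}:=T_0$ completes the construction.
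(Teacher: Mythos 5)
Your proposal is correct and follows essentially the same route as the paper: mollify the data, run the $\ep$-independent moment machinery of Section \ref{nondege} (via Remark \ref{rem} and the argument of Lemma \ref{lem2.7}), prove the uniform lower bound on $T_{\rm max,\ep}$ together with the uniform $L^\infty$ bound by an $L^p$ differential inequality plus ODE comparison and Moser iteration (this is exactly Lemma \ref{lem3.3}), and pass to the limit by compactness to obtain both the weak formulation and the moment inequality on $[0,T_0)$. The only cosmetic deviation is that the paper does not insist that the mollified data retain the exact inner-ball mass $\widetilde{M}_0$, but instead works with $\widetilde{M}_0-\eta_0$ for a small $\eta_0$ and chooses $s_0$ accordingly.
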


To show this proposition
we deal with a problem approximate to the system \eqref{P1}, 
\begin{align}\label{P3}
  \begin{cases}
    (u_\ep)_t = \Delta (u_\ep+\ep)^{m_1} - \chi_1 \nabla\cdot(u_\ep \nabla w_\ep)
          + \mu_1 u_\ep (1-u_\ep-a_1v_\ep),
    &x\in\Omega,\ t>0,\\
    (v_\ep)_t = \Delta (v_\ep+\ep)^{m_2} - \chi_2 \nabla\cdot(v_\ep\nabla w_\ep)
            + \mu_2 v_\ep (1-a_2u_\ep-v_\ep),
    &x\in\Omega,\ t>0,\\
    0 = \Delta w_\ep +u_\ep+v_\ep-\ol{M_\ep}(t), 
    &x\in\Omega,\ t>0,\\
    \nabla u_\ep \cdot\nu = \nabla v_\ep \cdot\nu = \nabla w_\ep \cdot\nu = 0, 
    &x\in\pa\Omega,\ t>0,\\
    u_\ep(x,0)=u_{0\ep}(x), \ \quad v_\ep(x,0)=v_{0\ep}(x), 
    &x\in \Omega, 
\end{cases}
\end{align}
with $\int_\Omega w_\ep(x,t)\,dx=0$, $t>0$, where $\ep\in(0,1)$ and 
$\ol{M_\ep}(t):=\frac{1}{|\Omega|}\int_{\Omega}(u_\ep(x,t)+v_\ep(x,t))\,dx$ 
for $t>0$. 
Moreover, $u_{0\ep}\in C^\infty(\ol{\Omega})$ and $v_{0\ep}\in C^\infty(\ol{\Omega})$ are defined as
\[
  u_{0\ep} := \left.(\rho_\ep * \overline{u_0})\right|_{\overline{\Omega}}, \quad
  v_{0\ep} := \left.(\rho_\ep * \overline{v_0})\right|_{\overline{\Omega}},
\]
where $\overline{u_0}$ and $\overline{v_0}$ are the zero extensions of 
$u_0 \in L^\infty(\Omega)$ and $v_0 \in L^\infty(\Omega)$, 
that is,
  \[
    \overline{u_0}(x) := 
      \begin{cases}
        u_0(x) &\mbox{if}\ x \in \Omega,
      \\
        0 &\mbox{otherwise},
      \end{cases}
    \quad
    \overline{v_0}(x) := 
      \begin{cases}
        v_0(x) &\mbox{if}\ x \in \Omega,
      \\
        0 &\mbox{otherwise},
      \end{cases}
  \]
and $\rho_\ep \in C_{\rm c}^\infty(\mathbb{R}^n)$ 
is the mollifier such that $0 \le \rho_\ep \in C_{\rm c}^\infty(\mathbb{R}^n)$ 
is radially symmetric and that 
$\mbox{supp}\, \rho_\ep \subset \overline{B_{\ep}(0)}$ and 
$\int_{\mathbb{R}^n} \rho_\ep(x)\,dx=1$.

Unless otherwise stated we fix $M_0>0$,  
$m_1,m_2>1$, $\chi_1,\chi_2,\mu_1,\mu_2,a_1,a_2>0$ and $\ep\in(0,1)$
as well as initial data $u_0$ and $v_0$ with 
$\int_\Omega u_0\le M_0$ satisfying \eqref{initial3}.
Then, clearly $u_{0\ep}$ and $v_{0\ep}$ are also nonnegative, radially symmetric.
Hence, Lemma \ref{LS} provides the approximate solution 
$(u_\ep,v_\ep,w_\ep)$ of \eqref{P3}, 
where we denote its maximal existence time by $T_{\rm max,\ep}$. 
In order to consider convergence of approximate solution 
$(u_\ep,v_\ep,w_\ep)$ as $\ep\to0$ 
we first show uniform lower bound of $T_{\rm max,\ep}$, 
which ensures that $T_{\rm max,\ep}$ does not vanish in the passage to the limit. 

\begin{lem}\label{lem3.3}
There exist $T_0\in(0,\infty)$ and $K_0>0$ such that for any $\ep\in(0,1)$,
  \begin{align}\label{uniform}
    T_0\le T_{\rm max,\ep}
    \quad\mbox{and}\quad
    \|u_\ep(\cdot,t)\|_{L^\infty(\Omega)} 
    + \|v_\ep(\cdot,t)\|_{L^\infty(\Omega)} \le K_0
    \quad\mbox{for all } t\in[0,T_0).
  \end{align}
\end{lem}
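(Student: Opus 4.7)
The plan is to derive a Riccati-type ordinary differential inequality for $y_\ep(t) := \|u_\ep(\cdot,t)\|_{L^\infty(\Omega)}+\|v_\ep(\cdot,t)\|_{L^\infty(\Omega)}$ whose constants and initial value admit bounds independent of $\ep$, and then to apply an ODE comparison argument. The mollification construction gives $\|u_{0\ep}\|_{L^\infty(\Omega)}\le\|u_0\|_{L^\infty(\Omega)}$ and $\|v_{0\ep}\|_{L^\infty(\Omega)}\le\|v_0\|_{L^\infty(\Omega)}$, so $y_\ep(0)\le y_0:=\|u_0\|_{L^\infty(\Omega)}+\|v_0\|_{L^\infty(\Omega)}$ for every $\ep\in(0,1)$.

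Next, using the third equation in \eqref{P3} to substitute $\Delta w_\ep = -(u_\ep+v_\ep-\ol{M_\ep})$ into the chemotaxis divergence via $-\chi_1\nabla\cdot(u_\ep\nabla w_\ep)=-\chi_1 u_\ep\Delta w_\ep-\chi_1\nabla u_\ep\cdot\nabla w_\ep$, the first equation of \eqref{P3} rewrites as
\[
(u_\ep)_t = \Delta(u_\ep+\ep)^{m_1} - \chi_1\nabla u_\ep\cdot\nabla w_\ep + \chi_1 u_\ep(u_\ep+v_\ep-\ol{M_\ep}) + \mu_1 u_\ep(1-u_\ep-a_1 v_\ep).
\]
Since $u_\ep\in C^{2,1}(\ol\Omega\times[0,T_{\rm max,\ep}))$ by Lemma \ref{LS}, the function $t\mapsto\|u_\ep(\cdot,t)\|_{L^\infty(\Omega)}$ is locally Lipschitz. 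Applying Hamilton's trick (i.e.\ Danskin's theorem) at a spatial maximizer $x_0(t)$, one has $\nabla u_\ep(x_0(t),t)=0$ and $\Delta(u_\ep+\ep)^{m_1}(x_0(t),t)\le 0$; dropping the manifestly non-positive contributions $-\chi_1\ol{M_\ep}u_\ep$ and $-\mu_1 u_\ep^2-\mu_1 a_1 u_\ep v_\ep$ yields
\[
\tfrac{d}{dt}\|u_\ep\|_{L^\infty(\Omega)} \le \chi_1\|u_\ep\|_{L^\infty(\Omega)}^2 + \chi_1\|u_\ep\|_{L^\infty(\Omega)}\|v_\ep\|_{L^\infty(\Omega)} + \mu_1\|u_\ep\|_{L^\infty(\Omega)}
\]
at a.e.\ $t$, and the symmetric inequality for $\|v_\ep\|_{L^\infty(\Omega)}$. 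Adding these and using $M^2+N^2+2MN=(M+N)^2$ gives
\[
y_\ep'(t)\le C_1\, y_\ep^2(t) + C_2\, y_\ep(t)
\]
with $C_1:=\max\{\chi_1,\chi_2\}$ and $C_2:=\max\{\mu_1,\mu_2\}$, both independent of $\ep$.

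The comparison ODE $Y'=C_1 Y^2+C_2 Y$, $Y(0)=y_0$, admits an explicit solution which blows up at a finite time $T_{\rm blow}=T_{\rm blow}(y_0,C_1,C_2)>0$. Choose $T_0:=T_{\rm blow}/2\in(0,\infty)$ and $K_0:=\sup_{[0,T_0]}Y<\infty$; both depend only on $y_0,C_1,C_2$, hence are $\ep$-independent. Standard ODE comparison (Gronwall-type) then gives $y_\ep(t)\le Y(t)\le K_0$ on $[0,T_0]\cap[0,T_{\rm max,\ep})$ for every $\ep\in(0,1)$. Finally, the blow-up alternative in Lemma \ref{LS} rules out $T_{\rm max,\ep}<T_0$: otherwise $y_\ep$ would diverge at $T_{\rm max,\ep}$, contradicting the bound $K_0$. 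Hence $T_{\rm max,\ep}\ge T_0$ and \eqref{uniform} holds.

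The main obstacle is the rigorous execution of the pointwise maximum-principle step, because one must justify that $\tfrac{d}{dt}\|u_\ep(\cdot,t)\|_{L^\infty(\Omega)}$ satisfies the ODI at almost every $t$ despite the maximizer possibly moving. The envelope theorem suffices since $u_\ep$ is classical on $[0,T_{\rm max,\ep})$, but if that feels too delicate an alternative is a Moser--Alikakos $L^p$ iteration applied to the rewritten equation: testing with $u_\ep^{p-1}$, using $\chi_1(p-1)\int u_\ep^{p-1}\nabla u_\ep\cdot\nabla w_\ep=\chi_1\tfrac{p-1}{p}\int u_\ep^p(u_\ep+v_\ep-\ol{M_\ep})$ to eliminate the drift, and carefully tracking $p$-dependence before sending $p\to\infty$ produces the same ODI. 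Once the ODI is in hand, the ODE comparison and invocation of Lemma \ref{LS} are essentially immediate.
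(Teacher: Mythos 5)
Your proof is correct, but it follows a genuinely different route from the paper's. The paper (following the Ishida--Yokota scheme it cites) works at the level of $L^p$ norms: it fixes $p>\max\{1,\tfrac{n}{2}(2-\min\{m_1,m_2\})\}$, introduces the first time $\tau_\ep$ at which $\|u_\ep\|_{L^p}^p+\|v_\ep\|_{L^p}^p$ reaches a prescribed level, absorbs the superlinear chemotaxis contributions into the diffusive dissipation terms via the Gagliardo--Nirenberg and Young inequalities, deduces $\frac{d}{dt}\big(\|u_\ep+\ep\|_{L^p}^p+\|v_\ep+\ep\|_{L^p}^p\big)\le c_7$ on $(0,\tau_\ep)$ and hence the uniform lower bound $\tau_\ep\ge 1/c_7$, and finally upgrades to $L^\infty$ by Moser iteration. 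You instead exploit the elliptic third equation pointwise: substituting $\Delta w_\ep=-(u_\ep+v_\ep-\ol{M_\ep})$ turns the chemotaxis term into a quadratic reaction at a spatial maximizer, where the diffusion term is simply discarded as nonpositive, yielding the Riccati inequality $y_\ep'\le C_1y_\ep^2+C_2y_\ep$ with $\ep$-independent constants and initial bound $y_\ep(0)\le\|u_0\|_{L^\infty(\Omega)}+\|v_0\|_{L^\infty(\Omega)}$; ODE comparison and the blow-up alternative of Lemma \ref{LS} then finish the job. Your argument is shorter and more elementary, and the one step requiring care---differentiating $t\mapsto\|u_\ep(\cdot,t)\|_{L^\infty(\Omega)}$ via Hamilton's trick, including the possible boundary maximizer---is legitimate here because the approximate solutions are classical on $\ol{\Omega}\times(0,T_{\rm max,\ep})$ with homogeneous Neumann data, so that $\nabla u_\ep=0$ and $\Delta(u_\ep+\ep)^{m_1}\le0$ hold also at a boundary maximum. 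What the paper's $L^p$ route buys is robustness: it neither relies on pointwise classical manipulations at the maximizer nor on the exact identity for $\Delta w_\ep$, and it carries over to settings (e.g.\ a parabolic signal equation) where such substitutions fail; your fallback Moser--Alikakos alternative is in fact essentially the paper's proof.
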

\begin{proof}
The proof is based on that of \cite[Lemma 2.4]{I-Y_2013}. 
Let us take 
  \[
    p>\max\left\{1,\frac{n}{2}(2-\min\{m_1,m_2\})\right\}
  \]
and put 
  \[
    \tau_\ep := \sup\{ \tau \in (0,T_{\rm max,\ep}) \mid 
    \|u_\ep(\cdot,t)\|^p_{L^p(\Omega)} + \|v_\ep(\cdot,t)\|^p_{L^p(\Omega)} \le c_1 
    \quad\mbox{for all}\ t \in [0,\tau) \}
  \]
with 
  \begin{align}\label{c1}
    c_1:=(\|u_0\|_{L^p(\Omega)}+|\Omega|^\frac{1}{p})^p +(\|v_0\|_{L^p(\Omega)}+|\Omega|^\frac{1}{p})^p+1.
  \end{align}
Here, we have $\tau_\ep>0$ because  
$u_\ep \in C^0(\overline{\Omega}\times[0,T_{\rm max,\ep})) \subset C^0([0,T_{\rm max,\ep}); L^p(\Omega))$. 
We first confirm that there exists $T_p\in(0,\infty)$ such that 
  \begin{align}\label{Tp}
    T_p\le T_{\rm max,\ep}
    \quad\mbox{and}\quad
    \|u_\ep(\cdot,t)\|^p_{L^p(\Omega)} + \|v_\ep(\cdot,t)\|^p_{L^p(\Omega)} 
    \le c_1
    \quad\mbox{for all } t\in[0,T_p).
  \end{align}
As in the proof of \cite[Lemma 2.4]{I-Y_2013}, 
we consider the cases that $\tau_\ep=T_{\rm max,\ep}=\infty$ and that 
  \begin{align}\label{case2}
    \tau_\ep<T_{\rm max,\ep}
    \quad\mbox{and}\quad
    \|u_\ep(\cdot,\tau_\ep)\|_{L^p(\Omega)}^p 
    + \|v_\ep(\cdot,\tau_\ep)\|_{L^p(\Omega)}^p 
    = c_1.
  \end{align}
In the case that $\tau_\ep=T_{\rm max,\ep}=\infty$, 
owing to the definition of $\tau_\ep$, we obtain the estimate 
$\|u_\ep(\cdot,t)\|^p_{L^p(\Omega)} + \|v_\ep(\cdot,t)\|^p_{L^p(\Omega)} \le c_1$ 
for all $t\in[0,\infty)$. This enables us to choose any $T_p\in(0,\infty)$ 
satisfying \eqref{Tp}. 
In the case \eqref{case2}, noting that
$\mu_1 u_\ep (1-u_\ep-a_1v_\ep) \le \mu_1 u_\ep$, 
we can observe that
  \begin{align*}
    \frac{1}{p}\cdot\frac{d}{dt}\|u_\ep+\ep\|_{L^p(\Omega)}^p
    &\le -\frac{4m_1(p-1)}{(m_1+p-1)^2} 
             \| \nabla(u_\ep+\ep)^\frac{p+m_1-1}{2} \|^2_{L^2(\Omega)}
    \\ 
    &\quad\,
       +(p-1)\chi_1 \int_\Omega 
                       \nabla \left(\int^{u_\ep}_0 \xi(\xi+\ep)^{p-2}\,d\xi\right)
                       \cdot \nabla w_\ep
    \\ 
    &\quad\,
       +\mu_1 \int_\Omega u_\ep(u_\ep+\ep)^{p-1}
    \\ 
    &=: -\mathcal{I}_1 + \mathcal{I}_2 + \mathcal{I}_3
  \end{align*}
for all $t\in(0,\tau_\ep)$ (for the details see \cite[(2.5)]{TY_2022_DCDS-B}). 
As to $\mathcal{I}_2$, integrating by parts and using the third equation in \eqref{P3} and Young's inequality, we can make sure that 
  \begin{align*}
    \mathcal{I}_2 
   &= -(p-1)\chi_1 \int_\Omega  
         \left(\int^{u_\ep}_0 \xi(\xi+\ep)^{p-2}\,d\xi\right)
         \Delta w_\ep \\
   &\le (p-1)\chi_1 \int_\Omega  
         \left(\int^{u_\ep}_0 \xi(\xi+\ep)^{p-2}\,d\xi\right)
         (u_\ep + v_\ep) \\
   &\le \frac{p-1}{p}\chi_1 \int_\Omega (u_\ep+\ep)^{p+1}
          + \frac{p-1}{p}\chi_1 \int_\Omega (u_\ep+\ep)^p(v_\ep+\ep) \\
   &\le c_2 \|u_\ep+\ep\|_{L^{p+1}(\Omega)}^{p+1}
          + c_3 \|v_\ep+\ep\|_{L^{p+1}(\Omega)}^{p+1},
  \end{align*}
where $c_2:=\big(\frac{p-1}{p}+\frac{p-1}{p+1}\big) \chi_1$ 
and $c_3:=\frac{p-1}{p}\cdot\frac{1}{p+1}\chi_1$. 
Moreover, 
  \[ 
    \mathcal{I}_3\le \mu_1\|u_\ep+\ep\|_{L^p(\Omega)}^p.
  \]
Therefore, it follows that 
  \begin{align*}
    &\frac{1}{p}\cdot\frac{d}{dt}\|u_\ep+\ep\|_{L^p(\Omega)}^p \\
    &\quad\,
    \le -\mathcal{I}_1
         +c_2 \|u_\ep+\ep\|_{L^{p+1}(\Omega)}^{p+1}
          + c_3 \|v_\ep+\ep\|_{L^{p+1}(\Omega)}^{p+1}
         +\mu_1\|u_\ep+\ep\|_{L^p(\Omega)}^p
  \end{align*}
for all $t\in(0,\tau_\ep)$. 
A similar estimate for $v_\ep$ is derived, 
which together with this inequality provides us with the estimate 
such that 
  \begin{align}\label{energy}
    &\frac{1}{p}\cdot\frac{d}{dt}\|u_\ep+\ep\|_{L^p(\Omega)}^p
    + \frac{1}{p}\cdot\frac{d}{dt}\|v_\ep+\ep\|_{L^p(\Omega)}^p \notag\\
    &\quad\,
    \le -\mathcal{I}_1 - \mathcal{I}_4
         +c_4 \|u_\ep+\ep\|_{L^{p+1}(\Omega)}^{p+1}
          + c_4 \|v_\ep+\ep\|_{L^{p+1}(\Omega)}^{p+1}  \notag\\
    &\quad\,\quad\,
         +\mu_1\|u_\ep+\ep\|_{L^p(\Omega)}^p
         +\mu_2\|v_\ep+\ep\|_{L^p(\Omega)}^p
  \end{align}
for all $t\in(0,\tau_\ep)$, where 
$c_4>0$ and 
  \[
    \mathcal{I}_4:=\frac{4m_2(p-1)}{(m_2+p-1)^2} 
             \| \nabla(v_\ep+\ep)^\frac{p+m_2-1}{2} \|^2_{L^2(\Omega)}.
  \]
Applying the Gagliardo--Nirenberg inequality and 
Young's inequality to the third and fourth terms 
on the right-hand side of \eqref{energy} 
as in \cite[p.\ 269]{TY_2022_DCDS-B}, we obtain $c_5>0$ such that
  \[
    c_4 \|u_\ep+\ep\|_{L^{p+1}(\Omega)}^{p+1} 
    \le \mathcal{I}_1 
         + c_5\left\{ \| (u_\ep+\ep)^\frac{p+m_1-1}{2} \|_{L^\frac{2p}{p+m_1-1}(\Omega)}^{(1-\beta_1)\frac{2(p+1)}{p+m_1-1}} \right\}^{\theta_1}
         + c_5\| (u_\ep+\ep)^\frac{p+m_1-1}{2} \|_{L^\frac{2p}{p+m_1-1}(\Omega)}^\frac{2(p+1)}{p+m_1-1}
  \]
and
  \[
    c_4 \|v_\ep+\ep\|_{L^{p+1}(\Omega)}^{p+1}
    \le \mathcal{I}_4 
         + c_5\left\{ \| (v_\ep+\ep)^\frac{p+m_2-1}{2} \|_{L^\frac{2p}{p+m_2-1}(\Omega)}^{(1-\beta_2)\frac{2(p+1)}{p+m_2-1}} \right\}^{\theta_2}
         + c_5\| (v_\ep+\ep)^\frac{p+m_2-1}{2} \|_{L^\frac{2p}{p+m_2-1}(\Omega)}^\frac{2(p+1)}{p+m_2-1}
  \]
for all $t\in(0,\tau_\ep)$, where 
  \[
    \beta_i:=\frac{\frac{p+m_i-1}{2p}-\frac{p+m_i-1}{2(p+1)}}
               {\frac{p+m_i-1}{2p}+\frac{1}{n}-\frac{1}{2}}
    \quad\mbox{and}\quad
    \theta_i:=\left(1-\frac{\beta_i(p+1)}{p+m_i-1}\right)^{-1},
    \quad i\in\{1,2\}.
  \]
Therefore, 
  \begin{align*}
    &\frac{d}{dt}\|u_\ep+\ep\|_{L^p(\Omega)}^p
    + \frac{d}{dt}\|v_\ep+\ep\|_{L^p(\Omega)}^p \notag\\
    &\quad\,
    \le p \left(
         c_5 \|u_\ep+\ep\|_{L^p(\Omega)}^{(1-\beta_1)(p+1)\theta_1}
         + c_5 \|u_\ep+\ep\|_{L^p(\Omega)}^{p+1} 
         + \mu_1\|u_\ep+\ep\|_{L^p(\Omega)}^p
         \right) \notag\\
    &\quad\,\quad\,
         + p \left(
         c_5 \|v_\ep+\ep\|_{L^p(\Omega)}^{(1-\beta_2)(p+1)\theta_2} 
         + c_5 \|v_\ep+\ep\|_{L^p(\Omega)}^{p+1} 
         + \mu_2\|v_\ep+\ep\|_{L^p(\Omega)}^p
         \right)
  \end{align*}
for all $t\in(0,\tau_\ep)$. 
The definition of $\tau_\ep$ yields
  \[
    \| u_\ep(\cdot,t) + \ep \|_{L^p(\Omega)} 
    \le \| u_\ep(\cdot,t) \|_{L^p(\Omega)} + |\Omega|^\frac{1}{p}
    \le c_1^\frac{1}{p} + |\Omega|^\frac{1}{p}
    =:c_6
  \]
and similarly 
$\| v_\ep(\cdot,t) + \ep \|_{L^p(\Omega)} \le c_6$ for all $t\in(0,\tau_\ep)$. 
Thus we see that 
  \[
    \frac{d}{dt}\|u_\ep+\ep\|_{L^p(\Omega)}^p
    + \frac{d}{dt}\|v_\ep+\ep\|_{L^p(\Omega)}^p
    \le c_7
  \]
for all $t\in(0,\tau_\ep)$, where 
$c_7:=p\big(
         c_5c_6^{(1-\beta_1)(p+1)\theta_1} 
         + c_5c_6^{(1-\beta_2)(p+1)\theta_2}
         + 2c_5c_6^{p+1} +(\mu_1+\mu_2)c_6^p
        \big)$.
Integrating this from $0$ to $\tau_\ep$ implies that
  \[
    \|u_\ep(\cdot,\tau_\ep)\|_{L^p(\Omega)}^p
    + \|v_\ep(\cdot,\tau_\ep)\|_{L^p(\Omega)}^p
    - \|u_{0\ep}+\ep\|_{L^p(\Omega)}^p
    - \|v_{0\ep}+\ep\|_{L^p(\Omega)}^p
    \le c_7\tau_\ep.
  \]
By making use of \eqref{case2} and the inequalities 
$\| u_{0\ep}+\ep \|^p_{L^p(\Omega)} 
\le (\| u_{0} \|_{L^p(\Omega)} + |\Omega|^\frac{1}{p})^p$ 
and 
$\| v_{0\ep}+\ep \|^p_{L^p(\Omega)} 
\le (\| v_{0} \|_{L^p(\Omega)} + |\Omega|^\frac{1}{p})^p$, 
we can verify that 
  \[
    c_1
    -\left[(\| u_{0} \|_{L^p(\Omega)} + |\Omega|^\frac{1}{p})^p
      +(\| v_{0} \|_{L^p(\Omega)} + |\Omega|^\frac{1}{p})^p
     \right]
    \le c_7\tau_\ep.
  \]
Invoking \eqref{c1}, we obverse that $\frac{1}{c_7}\le \tau_\ep$, 
which means that $T_p= \frac{1}{c_7}$. 
In conclusion, we can find $T_p\in(0,\infty)$ satisfying \eqref{Tp}. 
Finally, we apply the Moser iteration (see \cite[Lemma A.1]{T-W}) to conclude this lemma. 
\end{proof}

To consider convergence of approximate solutions, 
we derive some estimates.
We first show uniform estimates for 
$\nabla (u_\ep+\ep)^{m_1}$ and $\nabla (v_\ep+\ep)^{m_2}$. 

\begin{lem}\label{esti1}
Let $T_0>0$ and $K_0>0$ satisfy \eqref{uniform}. 
Then there exists $C>0$ such that for any $\ep\in(0,1)$,
  \begin{align}\label{3.4esti}
    \| \nabla (u_\ep+\ep)^{m_1} \|_{L^2(0,T_0;L^2(\Omega))} \le C
    \quad\mbox{and}\quad
    \| \nabla (v_\ep+\ep)^{m_2} \|_{L^2(0,T_0;L^2(\Omega))} \le C.
  \end{align}
\end{lem}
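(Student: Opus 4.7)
The plan is to obtain both estimates via a standard energy argument at the level of the natural test function for the porous medium part, namely by multiplying the first equation of \eqref{P3} by $(u_\ep+\ep)^{m_1}$ (respectively the second by $(v_\ep+\ep)^{m_2}$). Since approximate solutions are classical on $[0,T_{\rm max,\ep})$ and $T_0\le T_{\rm max,\ep}$ by Lemma \ref{lem3.3}, all integrations by parts are justified. The time derivative on the left produces $\frac{1}{m_1+1}\frac{d}{dt}\int_\Omega (u_\ep+\ep)^{m_1+1}$, while the diffusion term, after one integration by parts using the boundary condition, produces exactly $-\int_\Omega |\nabla (u_\ep+\ep)^{m_1}|^2$, which is the quantity we want to control.

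Next I would handle the remaining terms using the uniform $L^\infty$ bound $\|u_\ep(\cdot,t)\|_{L^\infty(\Omega)}+\|v_\ep(\cdot,t)\|_{L^\infty(\Omega)}\le K_0$ for $t\in[0,T_0)$ supplied by Lemma \ref{lem3.3}. For the chemotaxis contribution, integration by parts yields $\chi_1\int_\Omega u_\ep \nabla w_\ep\cdot\nabla (u_\ep+\ep)^{m_1}$, which I would absorb half into the dissipation using Young's inequality, leaving $\frac{\chi_1^2}{2}\int_\Omega u_\ep^2|\nabla w_\ep|^2$. The factor $\nabla w_\ep$ is controlled by standard elliptic regularity applied to the third equation in \eqref{P3}: since $\int_\Omega w_\ep=0$ and $\int_\Omega (u_\ep+v_\ep-\ol{M_\ep})=0$, one has $\|w_\ep\|_{W^{2,p}(\Omega)}\le C_p\|u_\ep+v_\ep\|_{L^p(\Omega)}$ for any $p\in(1,\infty)$, so picking $p>n$ and invoking Sobolev embedding gives $\|\nabla w_\ep\|_{L^\infty(\Omega)}\le C(K_0)$ uniformly in $\ep$. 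The logistic term is immediately bounded by a constant depending only on $K_0$, $|\Omega|$, $\mu_1$ and $a_1$.

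Combining these estimates yields, for all $t\in[0,T_0)$,
\begin{equation*}
  \frac{1}{m_1+1}\frac{d}{dt}\int_\Omega (u_\ep+\ep)^{m_1+1}
  +\frac{1}{2}\int_\Omega |\nabla (u_\ep+\ep)^{m_1}|^2 \le C_\ast,
\end{equation*}
with $C_\ast$ independent of $\ep$. Integrating from $0$ to $T_0$ and using $\|u_{0\ep}+\ep\|_{L^{m_1+1}(\Omega)}\le \|u_0\|_{L^\infty(\Omega)}+1$ (uniform in $\ep$ by properties of the mollification) produces the first bound in \eqref{3.4esti}. The argument for $v_\ep$ is identical upon replacing $(m_1,\chi_1,\mu_1,a_1)$ by $(m_2,\chi_2,\mu_2,a_2)$.

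I do not anticipate a genuine obstacle: the uniform $L^\infty$ bound of Lemma \ref{lem3.3} is strong enough to defuse both the cross-diffusion and the logistic contributions. The only point requiring mild care is the elliptic bound on $\nabla w_\ep$, where one must use the prescribed normalization $\int_\Omega w_\ep=0$ together with the automatic zero-mean property of the source $u_\ep+v_\ep-\ol{M_\ep}$ to invoke $W^{2,p}$ regularity uniformly in $\ep$.
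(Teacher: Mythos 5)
Your proof is correct, and it is the same basic mechanism as the paper's (an energy estimate whose lower-order terms are defused by the uniform $L^\infty$ bound of Lemma \ref{lem3.3}), but with two small differences in execution worth noting. The paper tests the first equation with $u_\ep$ rather than $(u_\ep+\ep)^{m_1}$; this produces the dissipation in the form $\|\nabla(u_\ep+\ep)^{\frac{m_1+1}{2}}\|_{L^2(\Omega)}^2$, and the bound on $\nabla(u_\ep+\ep)^{m_1}$ is then recovered a posteriori by writing $\nabla(u_\ep+\ep)^{m_1}=\tfrac{2m_1}{m_1+1}(u_\ep+\ep)^{\frac{m_1-1}{2}}\nabla(u_\ep+\ep)^{\frac{m_1+1}{2}}$ and using $u_\ep\le K_0$. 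Your choice of test function produces the target quantity directly and skips that conversion, at the cost that the time-derivative term becomes $\frac{1}{m_1+1}\frac{d}{dt}\int_\Omega(u_\ep+\ep)^{m_1+1}$ instead of the $L^2$ norm; both are harmless since one only needs nonnegativity of that term after time integration. The second difference is the chemotaxis term: the paper integrates by parts once more, writing $\chi_1\int_\Omega u_\ep\nabla w_\ep\cdot\nabla u_\ep=-\tfrac{\chi_1}{2}\int_\Omega u_\ep^2\Delta w_\ep$ and substituting $\Delta w_\ep=\ol{M_\ep}-u_\ep-v_\ep$, which bounds it by $\chi_1K_0^3|\Omega|$ with no elliptic regularity needed; you instead absorb half into the dissipation via Young and invoke the uniform $W^{2,p}$/Sobolev bound $\|\nabla w_\ep\|_{L^\infty(\Omega)}\le C(K_0)$. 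Your route costs an extra (standard, and correctly justified via the zero-mean normalization) elliptic estimate, while the paper's is slightly more self-contained here; both are valid, and the elliptic gradient bound is in any case used later in Lemma \ref{esti2}.
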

\begin{proof}
We multiply the first equation in \eqref{P3} by $u_\ep$ and
integrate it over $\Omega$ to obtain 
  \[
    \frac{1}{2} \cdot \frac{d}{dt}\| u_\ep \|_{L^2(\Omega)}^2
    \le -\frac{4m_1}{(m_1+1)^2}\| \nabla(u_\ep+\ep)^\frac{m_1+1}{2} \|_{L^2(\Omega)}^2
           +\chi_1 \int_\Omega u_\ep
             \nabla w_\ep \cdot \nabla u_\ep
           +\mu_1 \|u_\ep\|_{L^2(\Omega)}^2
  \]
for all $t\in(0,T_{\rm max,\ep})$. 
As to the second term on the right-hand side of this inequality, 
we see from integration by parts and the third equation 
in \eqref{P3} as well as \eqref{uniform} that
  \[
    \chi_1 \int_\Omega u_\ep
             \nabla w_\ep \cdot \nabla u_\ep
    \le \frac{\chi_1}{2} \int_\Omega u_\ep^3
         + \frac{\chi_1}{2} \int_\Omega u_\ep^2 v_\ep
    \le \chi_1 K_0^3|\Omega|
  \]
for all $t\in(0,T_0)$. 
The rest of this proof is the same procedure as in the proof of 
\cite[Lemma 2.4]{TY_2022_DCDS-B}, and thus we have the first 
estimate in \eqref{3.4esti}.
Similarly, the corresponding estimate for $v_\ep$ 
can be derived.
\end{proof}

We next establish the following estimates for $u_\ep$ and $v_\ep$.

\begin{lem}\label{esti2}
Let $T_0>0$ and $K_0>0$ satisfy \eqref{uniform}. 
Then there exists $C>0$ such that for any $\ep\in(0,1)$,
  \begin{align}\label{inequep}
    \| \sqrt{t} (u_\ep^{m_1})_t \|_{L^2(0,T_0;L^2(\Omega))}^2 
    + \sup_{t\in(0,T_0)} \| \sqrt{t} \nabla u_\ep^{m_1}(\cdot,t) \|_{L^2(\Omega)}^2 \le C
  \end{align}
and 
  \begin{align}\label{ineqvep}
    \| \sqrt{t} (v_\ep^{m_2})_t \|_{L^2(0,T_0;L^2(\Omega))}^2 
    + \sup_{t\in(0,T_0)} \| \sqrt{t} \nabla v_\ep^{m_2}(\cdot,t) \|_{L^2(\Omega)}^2 \le C.
  \end{align}
\end{lem}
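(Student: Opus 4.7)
The plan is to test the first equation in \eqref{P3} against $t\cdot\partial_t U_\ep$ with $U_\ep:=(u_\ep+\ep)^{m_1}$, integrating over $\Omega\times(0,T)$ for arbitrary $T\in(0,T_0)$. The left-hand side becomes $\int_0^T\!\int_\Omega t\,m_1(u_\ep+\ep)^{m_1-1}(u_\ep)_t^2$, which, thanks to the upper bound $u_\ep+\ep\le K_0+1$ from \eqref{uniform}, dominates a positive constant multiple of $\int_0^T t\|\partial_t U_\ep\|_{L^2(\Omega)}^2\,dt$. The diffusion term, after one integration by parts in space and one in time, contributes $-\tfrac{T}{2}\|\nabla U_\ep(T)\|_{L^2(\Omega)}^2+\tfrac{1}{2}\int_0^T\|\nabla U_\ep\|_{L^2(\Omega)}^2\,dt$, whose second piece is uniformly bounded by Lemma~\ref{esti1}. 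The logistic term is controlled by Young's inequality and the $L^\infty$ bounds in \eqref{uniform}, producing a small multiple of $\int_0^T t\|\partial_t U_\ep\|^2\,dt$ to be absorbed plus a bounded remainder.

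The delicate piece is the chemotaxis contribution. After integration by parts in space and use of $\Delta w_\ep=-(u_\ep+v_\ep)+\ol{M_\ep}$, it splits as
\[
-\chi_1\!\int_0^T\!\!\int_\Omega t\,\nabla u_\ep\cdot\nabla w_\ep\,\partial_t U_\ep \;+\; \chi_1\!\int_0^T\!\!\int_\Omega t\,u_\ep(u_\ep+v_\ep-\ol{M_\ep})\partial_t U_\ep.
\]
The second summand is handled by Young's inequality and \eqref{uniform}. For the first, I use the pointwise identity $\nabla u_\ep\,\partial_t U_\ep=(u_\ep)_t\,\nabla U_\ep$ (both sides equal $m_1(u_\ep+\ep)^{m_1-1}(u_\ep)_t\nabla u_\ep$) to rewrite it as $-\chi_1\!\int_0^T\!\int_\Omega t(u_\ep)_t\nabla U_\ep\cdot\nabla w_\ep$, then apply Young's inequality with weight $m_1^{1/2}(u_\ep+\ep)^{(m_1-1)/2}$, arriving at
\[
\eta\!\int_0^T\!\!\int_\Omega t\,m_1(u_\ep+\ep)^{m_1-1}(u_\ep)_t^2 \;+\; C_\eta\!\int_0^T\!\!\int_\Omega \frac{t\,|\nabla U_\ep|^2\,|\nabla w_\ep|^2}{m_1(u_\ep+\ep)^{m_1-1}}.
\]
The first summand is absorbed on the left; the apparent singularity $(u_\ep+\ep)^{-(m_1-1)}$ in the second is defused by the algebraic identity $|\nabla U_\ep|^2/(u_\ep+\ep)^{m_1-1}=m_1^2(u_\ep+\ep)^{m_1-1}|\nabla u_\ep|^2$, and a uniform-in-$\ep$ bound on $\int_0^{T_0}\!\int_\Omega(u_\ep+\ep)^{m_1-1}|\nabla u_\ep|^2$ follows from the standard $L^2$ energy identity (testing the first equation in \eqref{P3} against $u_\ep$, using \eqref{uniform} and the $L^\infty$-bound on $\nabla w_\ep$ obtained via $W^{2,p}$ elliptic regularity for the third equation at $p>n$).

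Collecting the estimates and choosing $\eta$ small yields, uniformly in $\ep\in(0,1)$ and $T\in(0,T_0)$,
\[
\int_0^T t\,\|\partial_t U_\ep\|_{L^2(\Omega)}^2\,dt \;+\; T\,\|\nabla U_\ep(T)\|_{L^2(\Omega)}^2 \;\le\; C,
\]
whence \eqref{inequep} follows from the pointwise comparisons $|\nabla u_\ep^{m_1}|\le|\nabla U_\ep|$ and $|(u_\ep^{m_1})_t|\le|\partial_t U_\ep|$, both valid because $u_\ep\le u_\ep+\ep$ and $m_1>1$. The estimate \eqref{ineqvep} is obtained by the analogous argument applied to the second equation in \eqref{P3}. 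The main obstacle is precisely the chemotaxis term: a naive Young's inequality produces the singular weight $(u_\ep+\ep)^{-(m_1-1)}$ which would blow up as $\ep\to 0$ on the set $\{u_\ep=0\}$, and the rescue is the algebraic identity that converts this weight into $(u_\ep+\ep)^{m_1-1}|\nabla u_\ep|^2$, whose $L^1_{t,x}$ integral is already controlled independently of $\ep$ by the basic $L^2$ energy estimate.
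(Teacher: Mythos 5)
Your argument is correct and follows essentially the same route as the paper: both test the first equation against $t\,[(u_\ep+\ep)^{m_1}]_t$, absorb the chemotaxis cross term via Young's inequality using the uniform bounds on $\nabla w_\ep$ and $\Delta w_\ep$ coming from \eqref{uniform} and elliptic regularity, and control the resulting weight $(u_\ep+\ep)^{m_1-1}|\nabla u_\ep|^2$ through the space--time gradient estimate underlying Lemma \ref{esti1}. The only cosmetic difference is that the paper rewrites the reaction terms as exact time derivatives before integrating, whereas you handle them (equally validly) by Young's inequality and the $L^\infty$ bounds.
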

\begin{proof}
In deriving the claim, we follow \cite[Lemma 2.5]{TY_2022_DCDS-B}. 
In the following, we mainly point out the differences.
By multiplying the first equation in \eqref{P3} 
by $[(u_\ep+\ep)^m]_t$ 
and integrating it over $\Omega$, 
we can observe that 
  \begin{align*}
    &\frac{4m_1}{(m_1+1)^2}
      \left\| \left[(u_\ep+\ep)^\frac{m_1+1}{2}\right]_t \right\|_{L^2(\Omega)}^2
    +\frac{1}{2} \cdot \frac{d}{dt} \| \nabla (u_\ep+\ep)^{m_1} \|_{L^2(\Omega)}^2
    \\
    &\quad\,
      \le -\frac{2m_1}{m_1+1}\int_\Omega
             \big[
                    \nabla u_\ep \cdot\nabla w_\ep
                    +u_\ep \Delta w_\ep
             \big]
             (u_\ep+\ep)^\frac{m_1-1}{2}
             \left[(u_\ep+\ep)^\frac{m_1+1}{2}\right]_t
    \\
    &\quad\,\quad\,
           +\mu_1 \int_\Omega u_\ep \left[(u_\ep+\ep)^{m_1}\right]_t
           -\mu_1 \int_\Omega u_\ep^2 \left[(u_\ep+\ep)^{m_1}\right]_t
           -\mu_1a_1 \int_\Omega u_\ep v_\ep \left[(u_\ep+\ep)^{m_1}\right]_t
  \end{align*}
for all $t\in(0,T_{\rm max,\ep})$. 
The third equation in \eqref{P3} entails that 
$v_\ep=-\Delta w_\ep - u_\ep +\ol{M}(t)$. 
Thus the last term on the right-hand side of the above inequality 
is rewritten as 
  \begin{align*}
    -\mu_1a_1 \int_\Omega u_\ep v_\ep \left[(u_\ep+\ep)^{m_1}\right]_t
    &= \mu_1a_1 \int_\Omega u_\ep \Delta w_\ep \left[(u_\ep+\ep)^{m_1}\right]_t
      + \mu_1a_1 \int_\Omega u_\ep^2 \left[(u_\ep+\ep)^{m_1}\right]_t \\
    &\quad\,
      - \mu_1a_1 \ol{M}(t) \int_\Omega u_\ep \left[(u_\ep+\ep)^{m_1}\right]_t, 
  \end{align*}
so that we have 
  \begin{align*}
    &\frac{4m_1}{(m_1+1)^2}
      \left\| \left[(u_\ep+\ep)^\frac{m_1+1}{2}\right]_t \right\|_{L^2(\Omega)}^2
      + \frac{1}{2} \cdot \frac{d}{dt} \| \nabla (u_\ep+\ep)^{m_1} \|_{L^2(\Omega)}^2
    \\
    &\quad\, 
    \le -\frac{2m_1}{m_1+1}\int_\Omega
                    \nabla u_\ep \cdot\nabla w_\ep
             (u_\ep+\ep)^\frac{m_1-1}{2}
             \left[(u_\ep+\ep)^\frac{m_1+1}{2}\right]_t
    \\
    &\quad\,\quad\, 
           -\frac{2m_1}{m_1+1}\left(1 - \mu_1a_1\right) \int_\Omega
                    u_\ep \Delta w_\ep
             (u_\ep+\ep)^\frac{m_1-1}{2}
             \left[(u_\ep+\ep)^\frac{m_1+1}{2}\right]_t
    \\
    &\quad\,\quad\, 
           +\mu_1 \int_\Omega u_\ep \left[(u_\ep+\ep)^{m_1}\right]_t
           -\mu_1 \int_\Omega u_\ep^2 \left[(u_\ep+\ep)^{m_1}\right]_t
    \\
    &\quad\,\quad\, 
           + \mu_1a_1 \int_\Omega u_\ep^2 \left[(u_\ep+\ep)^{m_1}\right]_t
           - \mu_1a_1 \ol{M}(t) \int_\Omega u_\ep \left[(u_\ep+\ep)^{m_1}\right]_t
  \end{align*}
for all $t\in(0,T_{\rm max,\ep})$. 
Thanks to \eqref{uniform}, we see from elliptic regularity theory applied 
to the third equation in \eqref{P3} and the Sobolev embedding theorem 
that there is $c_1>0$ such that
$\|\nabla w_\ep(\cdot,t)\|_{L^\infty(\Omega)}\le c_1$
for all $t\in(0,T_0)$, where 
$c_1$ is independent of $\ep\in(0,1)$. 
Furthermore, the third equation in \eqref{P3} and \eqref{uniform} 
imply that 
  \[
     |\Delta w_\ep| \le \big| 
\frac{1}{|\Omega|}\int_\Omega (u_\ep+v_\ep)  
- u_\ep -v_\ep \big| \le 2K_0
  \]
for all $t\in(0,T_0)$. 
Hence, we argue as in \cite[pp.\ 272, 273]{TY_2022_DCDS-B} to obtain 
$c_2>0$ and $c_3>0$ such that 
  \begin{align*}
    &\frac{2m_1}{(m_1+1)^2}
      \left\| \left[(u_\ep+\ep)^\frac{m_1+1}{2}\right]_t \right\|_{L^2(\Omega)}^2
      + \frac{1}{2} \cdot \frac{d}{dt} \| \nabla (u_\ep+\ep)^{m_1} \|_{L^2(\Omega)}^2
    \\
    &\quad\, 
    \le c_2 \left\| \nabla (u_\ep+\ep)^\frac{m_1+1}{2} \right\|_{L^2(\Omega)}^2
         + c_3
    \\
    &\quad\,\quad\, 
           + m_1 \mu_1 \frac{d}{dt}\int_\Omega
                \left(
                  \int^{u_\ep}_0  \xi(\xi+\ep)^{m_1-1}\,d\xi
                \right)
           - m_1 \mu_1 \frac{d}{dt}\int_\Omega
                \left(
                  \int^{u_\ep}_0  \xi^2(\xi+\ep)^{m_1-1}\,d\xi
                \right)
    \\
    &\quad\,\quad\, 
           + m_1 \mu_1 \frac{d}{dt}\int_\Omega
                \left(
                  \int^{u_\ep}_0  \xi^2(\xi+\ep)^{m_1-1}\,d\xi
                \right)
           - \mu_1a_1 \ol{M}(t) \frac{d}{dt}\int_\Omega
                \left(
                  \int^{u_\ep}_0  \xi(\xi+\ep)^{m_1-1}\,d\xi
                \right)
  \end{align*}
for all $t\in(0,T_0)$. 
Multiplying this by $t$, changing the variable $t$ with $s$ 
and integrating it over $(0,t)$, we can make sure that 
  \begin{align*}
    &\frac{2m_1}{(m_1+1)^2}
      \left\| \sqrt{s} \left[(u_\ep+\ep)^\frac{m_1+1}{2}\right]_t \right\|_{L^2(0,t;L^2(\Omega))}^2
      + \frac{1}{2} t \| \nabla (u_\ep+\ep)^{m_1} \|_{L^2(\Omega)}^2
    \\
    &\quad\,
    \le \frac{1}{2} \| \nabla (u_\ep+\ep)^{m_1} \|_{L^2(0,t;L^2(\Omega))}^2
         + c_2 \left\| \sqrt{s}\nabla (u_\ep+\ep)^\frac{m_1+1}{2} \right\|_{L^2(0,t;L^2(\Omega))}^2 
         + c_3t 
    \\
    &\quad\,\quad\,
         + m_1 \mu_1 t \int_\Omega
                \left(
                  \int^{u_\ep}_0  \xi(\xi+\ep)^{m_1-1}\,d\xi
                \right)
         + m_1 \mu_1 \int^t_0 \left[\int_\Omega
                \left(
                  \int^{u_\ep}_0  \xi^2(\xi+\ep)^{m_1-1}\,d\xi
                \right)  \right]
    \\
    &\quad\,\quad\,
         + m_1 \mu_1 t \int_\Omega
                \left(
                  \int^{u_\ep}_0  \xi^2(\xi+\ep)^{m_1-1}\,d\xi
                \right)
    \\
    &\quad\,\quad\,
         + m_1 \mu_1a_1 \int^t_0 
                \left[ (\ol{M}(t)+t\ol{M}'(t)) \int_\Omega
                \left(
                  \int^{u_\ep}_0  \xi^2(\xi+\ep)^{m_1-1}\,d\xi
                \right)  \right]
  \end{align*}
for all $t\in(0,T_0)$. 
With regard to the sixth term, \eqref{uniform} yields 
  \begin{align*}
    m_1 \mu_1 t \int_\Omega
                \left(
                  \int^{u_\ep}_0  \xi^2(\xi+\ep)^{m_1-1}\,d\xi
                \right)
    &\le \frac{m_1 \mu_1}{m_1+2} t \int_\Omega (u_\ep+\ep)^{m_1+2} \\
    &\le \frac{m_1 \mu_1}{m_1+2} (K_0+1)^{m_1+2}|\Omega|T_0
  \end{align*}
for all $t\in(0,T_0)$. 
As to the seventh term, since \eqref{uniform} and the first and second equations in \eqref{P3} imply that 
$\ol{M}(t)\le K_0$ and 
$\ol{M}'(t)\le \mu_1\int_\Omega(u+v) \le \mu_1K_0$, 
it follows that 
  \begin{align*}
         &m_1 \mu_1a_1 \int^t_0 
                \left[ (\ol{M}(t)+t\ol{M}'(t)) \int_\Omega
                \left(
                  \int^{u_\ep}_0  \xi^2(\xi+\ep)^{m_1-1}\,d\xi
                \right)  \right] \\
    \quad\,&
      \le \frac{m_1 \mu_1a_1}{m_1+2} (1+\mu_1T_0)K_0 \int^t_0 
         \left( \int_\Omega (u_\ep+\ep)^{m_1+2} \right)\\
    \quad\,&
      \le \frac{m_1 \mu_1a_1}{m_1+2} (1+\mu_1T_0)K_0  
         (K_0+1)^{m_1+2}|\Omega|T_0
  \end{align*}
for all $t\in(0,T_0)$. 
The other terms are bounded by some constant, 
and so we can attain \eqref{inequep} 
(for details see \cite[pp.\ 273, 274]{TY_2022_DCDS-B}). 
Similarly, \eqref{ineqvep} holds.
\end{proof}

We are now in a position to prove convergence of approximate solutions. 

\begin{lem}\label{lem3.6}
Let $T_0>0$ and $K_0>0$ satisfy \eqref{uniform}.  
Then there exist subsequences $\{u_{\ep_k}\}$, $\{v_{\ep_k}\}$, $\{w_{\ep_k}\}$ ($\ep_k \to 0$ as $k \to \infty$) 
and nonnegative functions $u, v, w$ such that 
  \[
    u, v \in L^\infty(0,T_0;L^\infty(\Omega)), \ 
    u^{m_1}, v^{m_2} \in L^2(0,T_0;H^1(\Omega)),
    w \in L^\infty(0,T_0;W^{1,\infty}(\Omega)),
  \]
and as $k\to0$, 
  \begin{align*}
    &u_{\ep_k} \to u, \ v_{\ep_k} \to v 
    \quad\mbox{weakly}^\star\ \mbox{in}\ L^\infty(0,T_0;L^\infty(\Omega)),
    \\[1mm]
    &u_{\ep_k} \to u, \ v_{\ep_k} \to v \quad\mbox{strongly in}\ C^0([\eta,T_0];L^p(\Omega))
    \quad\mbox{for all}\ \eta\in(0,T_0) \ \mbox{and}\ p\in[1,\infty),
    \\[1mm]
    &\nabla (u_{\ep_k}+\ep_k)^{m_1} \to \nabla u^{m_1}, 
    \ \nabla (v_{\ep_k}+\ep_k)^{m_2} \to \nabla v^{m_2}
    \quad\mbox{weakly in}\ L^2(0,T_0;L^2(\Omega)),
    \\[1mm]
    &\nabla w_{\ep_k} \to \nabla w
    \quad\mbox{weakly}^\star\ \mbox{in}\ L^\infty(0,T_0;L^\infty(\Omega)).
\end{align*}
\end{lem}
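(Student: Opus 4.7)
The plan is to combine the uniform bounds of Lemmas \ref{lem3.3}--\ref{esti2} with standard weak and strong compactness machinery. From \eqref{uniform}, Banach--Alaoglu extracts a subsequence along which $u_{\ep_k}\rightharpoonup u$ and $v_{\ep_k}\rightharpoonup v$ weakly-$\star$ in $L^\infty(0,T_0;L^\infty(\Omega))$, with $u,v\ge 0$ preserved in the limit. Lemma \ref{esti1} then provides, after a further extraction, weak $L^2(0,T_0;L^2(\Omega))$-limits $\zeta_1,\zeta_2$ of $\nabla(u_{\ep_k}+\ep_k)^{m_1}$ and $\nabla(v_{\ep_k}+\ep_k)^{m_2}$, to be identified below. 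For $w_\ep$, applying standard elliptic regularity to the third equation in \eqref{P3} together with the normalization $\int_\Omega w_\ep=0$ and the $L^\infty$-bound on its right-hand side produces a uniform bound of $w_\ep$ in $L^\infty(0,T_0;W^{2,q}(\Omega))$ for every $q<\infty$; Sobolev embedding and a further weak-$\star$ extraction then give the claimed convergence of $\nabla w_\ep$ to $\nabla w$ with $w\in L^\infty(0,T_0;W^{1,\infty}(\Omega))$.

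For the strong convergence of $u_\ep,v_\ep$, the key is to apply an Aubin--Lions--Simon type compactness lemma to $u_\ep^{m_1}$ and $v_\ep^{m_2}$. Fixing $\eta\in(0,T_0)$, the weight $\sqrt{t}$ is bounded below on $[\eta,T_0]$, so Lemma \ref{esti2} furnishes uniform bounds of $u_\ep^{m_1}$ in $L^\infty(\eta,T_0;H^1(\Omega))$ and of $(u_\ep^{m_1})_t$ in $L^2(\eta,T_0;L^2(\Omega))$. Since $H^1(\Omega)\hookrightarrow\hookrightarrow L^2(\Omega)$, the Aubin--Lions--Simon lemma gives relative compactness of $\{u_\ep^{m_1}\}$ in $C^0([\eta,T_0];L^2(\Omega))$, and similarly for $\{v_\ep^{m_2}\}$. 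A diagonal argument along $\eta_j\searrow 0$ extracts a single subsequence $\ep_k\to 0$ with $u_{\ep_k}^{m_1}\to U$ in $C^0([\eta,T_0];L^2(\Omega))$ for every $\eta>0$ and, along a further subsequence, a.e.\ in $\Omega\times(0,T_0)$. Continuity of $\xi\mapsto\xi^{1/m_1}$ on $[0,\infty)$ identifies $u_{\ep_k}\to U^{1/m_1}=:u$ a.e., and this $u$ coincides with the weak-$\star$ limit from the first step.

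To upgrade to strong $C^0([\eta,T_0];L^p(\Omega))$-convergence of $u_{\ep_k}$ itself, I would use the H\"older estimate $|a-b|\le C(m_1,K_0)\,|a^{m_1}-b^{m_1}|^{1/m_1}$ valid for $a,b\in[0,K_0]$: this turns the uniform $C^0([\eta,T_0];L^2(\Omega))$-rate for $u_{\ep_k}^{m_1}$ into an $L^{2m_1}$-rate for $u_{\ep_k}$ in $t$, and interpolation against the $L^\infty$-bound from Lemma \ref{lem3.3} yields any $p<\infty$ uniformly in $t\in[\eta,T_0]$. Finally, the identification of the weak gradient limit is routine: a.e.\ convergence $(u_{\ep_k}+\ep_k)^{m_1}\to u^{m_1}$ combined with the weak $L^2(L^2)$-limit $\zeta_1$ of $\nabla(u_{\ep_k}+\ep_k)^{m_1}$ and integration by parts against smooth test functions forces $\zeta_1=\nabla u^{m_1}$ distributionally, so in particular $u^{m_1}\in L^2(0,T_0;H^1(\Omega))$; the argument for $v_\ep^{m_2}$ is identical. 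The main technical obstacle I anticipate is the degeneration of the Lemma \ref{esti2} bounds as $t\to 0$, which prevents strong convergence uniformly down to $t=0$ and necessitates the diagonal extraction on sub-intervals $[\eta,T_0]$; this is contained and does not obstruct the subsequent construction of the moment solution.
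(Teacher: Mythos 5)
Your argument is correct and is essentially the route the paper intends: the paper's proof is a one-line reference to Lemmas \ref{esti1}--\ref{esti2} and to \cite[Lemma 5.3]{I-Y_2012}, which is precisely the combination of weak-$\star$ extraction from the uniform $L^\infty$ bound, elliptic regularity for $w_\ep$, and an Aubin--Lions--Simon compactness argument on $[\eta,T_0]$ driven by the $\sqrt{t}$-weighted estimates, followed by a diagonal extraction and identification of the limits via $|a-b|^{m_1}\le|a^{m_1}-b^{m_1}|$. Your writeup fills in the details that the paper delegates to the cited reference, with no substantive deviation.
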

\begin{proof}
This lemma can be shown by using Lemmas \ref{esti1} and \ref{esti2} 
as in \cite[Lemma 5.3]{I-Y_2012}. 
\end{proof}

Now we are in the position to prove Proposition \ref{LMS}.

\begin{proof}[Proof of Proposition \ref{LMS}] 
Let $M_0>0$ and $\widetilde{M}_0\in(0,M_0)$.
We define $U_\ep$ and $\phi_{U_\ep}$ as 
  \[
    U_\ep(s,t) := \int^{s^\frac{1}{n}}_0 \rho^{n-1} u_\ep(\rho,t)\,d\rho
    \quad\mbox{and}\quad
    \phi_{U_\ep}(t) := \int^{s_0}_{0} s^{-b}(s_0-s) U_\ep(s,t)\,ds
  \]
for $s\in[0,R^n]$ and $t\in[0,T_{\rm max})$, 
where $s_0\in(0,R^n)$, $b\in(1,2)$ and $\ep\in(0,1)$.
We see that for initial data $u_0$ and $v_0$ fulfilling \eqref{initial3}, 
$u_{0\ep}$ and $v_{0\ep}$ are also nonnegative, radially symmetric and radially nonincreasing.
Thus Lemma \ref{lem2.6} ensures that there are $b\in(1,2)$, 
$c_1>0$ and $c_2>0$ such that for any $s_0\in(0,R^n)$ and $\ep\in(0,1)$,
  \[
    \phi_{U_\ep}'(t) \ge c_1s_0^{-(3-b)} \phi_{U_\ep}^2(t) -c_2 s_0^{3-b-\frac{4}{n(2-m_1)}}
  \]
for all $t\in(0,T_{\rm max,\ep})$. 
Here, we note from Remark \ref{rem} that $c_1$ and $c_2$ are independent of $\ep$. 
Now we take $\eta_0\in(0,\widetilde{M}_0)$ so small 
and fix $s_0\in(0,R^n)$ satisfying 
  \begin{align}\label{approcondi1}
     s_0 \le \left(\frac{c_1}{2} \cdot \left(\frac{2^{2-b} (\widetilde{M}_0-\eta_0)}{\omega_n}\right)^2 \cdot \frac{1}{c_2}\right)^{\frac{1}{2-\frac{4}{n(2-m_1)}}}.
  \end{align}
Also, put $r_0:=\big(\frac{s_0}{4}\big)^\frac{1}{n}$ 
and choose $u_0$ and $v_0$ with \eqref{initial2} and \eqref{initial3}. 
The definition of $u_{0\ep}$ implies $u_{0\ep}\to u_0$ 
in $L^1(\Omega)$ as $\ep\to0$, so that 
there is $\ep_0\in(0,1)$ such that for any $\ep\in(0,\ep_0)$,
  \begin{align}\label{approcondi2}
    \int_{B_{r_0}(\Omega)} u_{0\ep} \ge \widetilde{M}_0-\eta_0.
  \end{align}
Moreover, Lemma \ref{lem3.3} provides $T_0>0$ and $K_0>0$ with \eqref{uniform}.
By using \eqref{approcondi1}, \eqref{approcondi2} and
  \[
    S_\ep:=\left\{T\in(0,T_0) \;\middle|\; \phi_{U_\ep}(t)\ge\frac{2^{b-3}(\widetilde{M}_0-\eta_0)}{\omega_n}s_0^{2-b}\quad\mbox{for all}\ t\in[0,T]\right\}, 
  \]
as in the proof of Lemma \ref{lem2.7} we have $c_3>0$ such that 
$\phi_{U_\ep}'(t) \ge c_3s_0^{-(3-b)} \phi_{U_\ep}^2(t)$ for all $t\in(0,T_0)$ and $\ep\in(0,\ep_0)$. 
Integrating it over $(0,t)$ implies 
$\phi_{U_\ep}(t) - \phi_{U_\ep}(0) \ge c_3 \int^t_0 \phi_{U_\ep}^2(\tau)\,d\tau$
for all $t\in(0,T_0)$ and $\ep\in(0,\ep_0)$. 
By Lemma \ref{lem3.6}, the rest of the proof 
is a similar procedure as in \cite[Proof of Proposition 1]{TY_2022_DCDS-B}, and then, 
the conclusion results from Definition \ref{mms}. 
\end{proof}

%
%
\subsection{Finite-time blow-up}

Before proving Theorem \ref{thm2}, we state the following equivalence (see \cite[Lemma 3.1]{TY_2022_DCDS-B}).

\begin{lem}\label{lem3.7}
Let $T\in(0,\infty)$. Assume that a triplet $(u,v,w)$ of nonnegative 
functions defined on $\Omega \times (0,T)$ satisfies 
that $u,v \in L^\infty(0,T;L^\infty(\Omega))\cap C^0_{{\rm w}-\star}([0,T);L^\infty(\Omega))$ and 
$u^{m_1},v^{m_2}, w \in L^2(0,T;H^1(\Omega))$.
Then the following two conditions are equivalent.
\begin{enumerate}
\setlength{\itemindent}{-7pt}
\item[{\rm (a)}] For all $\varphi \in L^2(0,T;H^1(\Omega)) \cap W^{1,1}(0,T;L^2(\Omega))$
with ${\rm supp}\, \varphi(x,\cdot)\subset[0,T)$ {\rm (}a.a.\ $x\in\Omega${\rm )},
\begin{align*}
  &\int^{T}_{0} \int_\Omega 
    (\nabla u^{m_1} \cdot \nabla \varphi - \chi_1 u \nabla w \cdot \nabla \varphi
     - \mu_1 u (1-u-a_1v) \varphi 
     - u \varphi_t)
  = \int_\Omega u_0 \varphi(\cdot,0),
  \\
  &\int^{T}_{0} \int_\Omega 
    (\nabla v^{m_2} \cdot \nabla \varphi - \chi_2 v \nabla w \cdot \nabla \varphi
     - \mu_2 v (1-a_2u-v) \varphi 
     - v \varphi_t)
  = \int_\Omega v_0 \varphi(\cdot,0),
  \\
  &\int^{T}_{0} \int_\Omega \nabla w \cdot \nabla \varphi
  - \int^{T}_{0} \int_\Omega (u+v) \varphi 
  + \int^{T}_{0} \left(\ol{M}(t) \int_\Omega \varphi \right)
  =0,
\end{align*}
  \item[{\rm (b)}] $u_t, v_t \in L^2(0,T;(H^1(\Omega))^\star)$,
  and for all $\psi \in H^1(\Omega)$,
    \begin{align*}
      &\int_\Omega u_t \psi
      = -\int_\Omega (\nabla u^{m_1} \cdot \nabla \psi 
                              - \chi_1 u \nabla w \cdot \nabla \psi
                              - \mu_1u(1-u-a_1v) \psi),
    \\
      &\int_\Omega v_t \psi
      = -\int_\Omega (\nabla v^{m_2} \cdot \nabla \psi 
                              - \chi_2 v \nabla w \cdot \nabla \psi
                              - \mu_2v(1-a_2u-v) \psi),
    \\
      &\int_\Omega \nabla w \cdot \nabla \psi
      - \int_\Omega (u+v) \psi 
      + \overline{M}(t) \int_\Omega \psi= 0
    \end{align*}
\hspace{-7pt}for a.a.\ $t \in [0,T)$ with $u(\cdot,0)=u_0$ and $v(\cdot,0)=v_0$.
  \end{enumerate}
\end{lem}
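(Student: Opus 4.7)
The equivalence is the standard translation between the distributional formulation (a) and the Bochner-space formulation (b); both directions are handled by choosing tensor-product test functions and applying a temporal integration by parts.

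\emph{For the direction (a) $\Rightarrow$ (b):} I would insert separable test functions of the form $\varphi(x,t)=\zeta(t)\psi(x)$ with $\psi\in H^1(\Omega)$ and $\zeta\in C_c^\infty((0,T))$ into the $u$-identity of (a). This identifies the distributional time derivative of $t\mapsto\int_\Omega u(\cdot,t)\psi$ with the functional
\begin{align*}
G_\psi(t):=-\int_\Omega\bigl(\nabla u^{m_1}\cdot\nabla\psi - \chi_1 u\nabla w\cdot\nabla\psi - \mu_1 u(1-u-a_1v)\psi\bigr).
\end{align*}
Using the hypotheses $u^{m_1}, w\in L^2(0,T;H^1(\Omega))$ and $u,v\in L^\infty(0,T;L^\infty(\Omega))$, one checks the bound $|G_\psi(t)|\le g(t)\,\|\psi\|_{H^1(\Omega)}$ with $g\in L^2(0,T)$, whence $u_t\in L^2(0,T;(H^1(\Omega))^\star)$ and the first pointwise identity in (b) holds for a.a.\ $t$. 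The $v$-equation is handled identically, while the $w$-line in (b) is simply the pointwise-in-$t$ restriction of the corresponding identity in (a), which is available because no time derivative is involved.

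\emph{For the direction (b) $\Rightarrow$ (a):} Given an admissible $\varphi\in L^2(0,T;H^1(\Omega))\cap W^{1,1}(0,T;L^2(\Omega))$ with $\mathrm{supp}\,\varphi(x,\cdot)\subset[0,T)$, I would test the $u_t$-identity in (b) with $\psi=\varphi(\cdot,t)$, integrate over $(0,T)$, and apply temporal integration by parts:
\begin{align*}
\int_0^T\langle u_t,\varphi\rangle\,dt=-\int_0^T\int_\Omega u\,\varphi_t - \int_\Omega u_0\,\varphi(\cdot,0).
\end{align*}
Substituting the identity for $\langle u_t,\varphi\rangle$ supplied by (b) then delivers the first line of (a); the $v$-line follows by the same argument, and the $w$-line is immediate after integrating in $t$.

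\emph{Main obstacle.} The delicate point is justifying the temporal integration-by-parts formula above. Since only $u^{m_1}$ (not $u$ itself) is assumed to lie in $L^2(0,T;H^1(\Omega))$, the classical Lions--Magenes embedding $L^2(H^1)\cap H^1((H^1)^\star)\hookrightarrow C([0,T];L^2)$ does not directly apply. I would circumvent this by first verifying the formula for separable test functions $\zeta(t)\psi(x)$ with $\zeta\in C^\infty([0,T])$ vanishing near $t=T$, where the identity reduces to the elementary one-dimensional rule for functions in $H^1(0,T)$, and then extending to the full admissible class of $\varphi$ by a density argument. The weak-$\star$ continuity $u\in C^0_{\mathrm{w}-\star}([0,T);L^\infty(\Omega))$ ensures that the trace value $u(\cdot,0)=u_0$ appearing in the boundary term is meaningful.
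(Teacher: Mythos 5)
Your argument is essentially correct, but note that the paper itself gives no proof of this lemma: it is quoted verbatim from the reference [TY\_2022\_DCDS-B, Lemma 3.1], so there is nothing in the paper to compare against. Your sketch is the standard translation between the two formulations, and the two nontrivial points you isolate (the bound $|G_\psi(t)|\le g(t)\|\psi\|_{H^1(\Omega)}$ with $g\in L^2(0,T)$, which uses exactly the assumed regularity $u^{m_1},w\in L^2(0,T;H^1(\Omega))$ and $u,v\in L^\infty(0,T;L^\infty(\Omega))$, and the justification of the temporal integration by parts via separable test functions plus density, with $C^0_{{\rm w}-\star}$ supplying the trace at $t=0$) are the right ones. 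One small omission: in the direction (a) $\Rightarrow$ (b) you test only with $\zeta\in C_c^\infty((0,T))$, which identifies $u_t$ but cannot recover the initial condition $u(\cdot,0)=u_0$ required in (b); for that you must additionally use cutoffs with $\zeta(0)\neq 0$ and compare the resulting boundary term $\zeta(0)\int_\Omega u_0\psi$ with $\zeta(0)\int_\Omega u(\cdot,0)\psi$, the latter being well defined by the weak-$\star$ continuity. This is routine but should be stated.
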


Finally, we prove Theorem \ref{thm2}.

\begin{proof}[Proof of Theorem \ref{thm2}] 
This theorem can be shown similarly to \cite[Theorem 1.4]{TY_2022_DCDS-B}. 
Thus we only give a sketch of the proof. 
Let $M_0>0$ and $\widetilde{M}_0\in(0,M_0)$. 
Moreover, let $r_0\in(0,R)$ provided by Proposition \ref{LMS} 
and take $u_0$ and $v_0$ satisfying \eqref{initial2} and \eqref{initial3}.
Then, thanks to Proposition \ref{LMS}, we have 
a maximal moment solution $(u,v,w)$ of \eqref{P2} on $[0,\widetilde{T}_{\rm max})$. 
Then there exists $K>0$ such that 
  \begin{align}\label{key}
    \phi_U(t)-\phi_U(0) \ge K \int^t_0 \phi_U^2(\tau)\,d\tau
  \end{align}
for all $t\in(0,\widetilde{T}_{\rm max})$. 

We first verify that $\widetilde{T}_{\rm max}<\infty$ by contradiction. 
To this end, we assume that $\widetilde{T}_{\rm max}=\infty$. 
Let us introduce $\Phi_U$ as
  \[
    \Phi_U(t) := \int_{0}^{t} \phi_U^2(\tau)\,d\tau + \frac{\phi_U(0)}{K}
    \quad\mbox{for}\ t \in [0,\infty), 
  \]
where $\Phi_U \in C^0([0,\infty)) \cap C^1((0,\infty))$. 
In light of \eqref{key} we see that 
$\Phi_U'(t) \ge K^2\Phi_U^2(t)$
for all $t \in (0,\infty)$, which leads to 
$t \le \frac{1}{K^2\Phi_U(0)}<\infty$ for all $t\in(0,\infty)$. 
This is a contradiction, so that $\widetilde{T}_{\rm max}<\infty$. 

Next we prove that $(u,v,w)$ blows up at $\widetilde{T}_{\rm max}$ by contradiction. 
To this end, let us assume that
${\limsup}_{t \nearrow \widetilde{T}_{\rm max}}\, (\|u(t)\|_{L^\infty(\Omega)} + \|v(t)\|_{L^\infty(\Omega)})<\infty$.
By means of Lemma \ref{lem3.7} it follows that 
$u_t, v_t \in L^2(0,\widetilde{T}_{\rm max};(H^1(\Omega))^\star)$, 
which ensures that $u$ and $v$ are uniformly continuous 
on $[0,\widetilde{T}_{\rm max})$ in $(H^1(\Omega))^\star$. 
Hence we can find $\widetilde{u}_{\widetilde{T}_{\rm max}}, \widetilde{v}_{\widetilde{T}_{\rm max}}\in (H^1(\Omega))^\star$ such that 
  \begin{alignat*}{3}
    \widetilde{u}_{\widetilde{T}_{\rm max}} &= 
    \lim_{t \nearrow \widetilde{T}_{\rm max}} u(\cdot,t), 
    &\quad\mbox{in}\ (H^1(\Omega))^\star \cap L^\infty(\Omega), \\ 
    \widetilde{v}_{\widetilde{T}_{\rm max}} &= 
    \lim_{t \nearrow \widetilde{T}_{\rm max}} v(\cdot,t) 
    &\quad\mbox{in}\ (H^1(\Omega))^\star \cap L^\infty(\Omega).    
  \end{alignat*}
Choosing the initial data as
$\widetilde{u}_{\widetilde{T}_{\rm max}}$ and $\widetilde{v}_{\widetilde{T}_{\rm max}}$, and 
constructing a weak solution of \eqref{P2} on $[\widetilde{T}_{\rm max},\widetilde{T}_{\rm max}+T_1)$ 
with some $T_1>0$, 
we can extend $(u,v,w)$ to a weak solution on $[0,\widetilde{T}_{\rm max}+T_1)$. 
Also, by continuity of $\phi_U$ at $\widetilde{T}_{\rm max}$ 
we can verify that the moment inequality holds beyond $\widetilde{T}_{\rm max}$. 
Thus we obtain a moment solution of \eqref{P2} on $[0,\widetilde{T}_{\rm max}+\sigma_1)$ for some $\sigma_1\in(0,T_1)$, which 
contradicts the definition of $\widetilde{T}_{\rm max}$. 
Consequently, $(u,v,w)$ blows up at $\widetilde{T}_{\rm max}$.
\end{proof}


\newpage

\bibliographystyle{plain}
\bibliography{T4_2023_two-sp_dege_arXiv.bbl}
\end{document}